\theoremstyle{theorem}
\newtheorem{assumption}{\sc Assumption}[section]
\newtheorem{definition_}{\sc Definition}[section]
\newtheorem{example_}{\sc Example}[section]
\newcommand{\CCC}{\mathcal{C}}
\newcommand{\uj}{u(t_j)}
\newcommand{\ujm}{u(t_{j-1})}
\newcommand{\sumsplit}{\frac1q\sum_{k=1}^q{\alpha_k}}
\DeclareMathOperator*{\essinf}{ess\,inf}
\newcommand{\tr}{\gamma}
\DeclareMathOperator*{\supp}{supp}
\title{Additive domain decomposition operator splittings -- convergence analyses in a dissipative framework}
\shorttitle{Additive domain decomposition operator splittings}
\author{%
{\sc
Eskil Hansen\thanks{Email: eskil@maths.lth.se}
and
Erik Henningsson\thanks{Corresponding author. Email: erikh@maths.lth.se}
} \\[2pt]
Centre for Mathematical Sciences, Lund University, PO Box 118, SE-221 00 Lund, Sweden}
\begin{document}
\maketitle
\thispagestyle{empty}

\begin{abstract}
{We analyze temporal approximation schemes based on overlapping domain decompositions. As such schemes enable computations on parallel and distributed hardware, they are commonly used when integrating large-scale parabolic systems. Our analysis is conducted by first casting the domain decomposition procedure into a variational framework based on weighted Sobolev spaces. The time integration of a parabolic system can then be interpreted as an operator splitting scheme applied to an abstract evolution equation governed by a maximal dissipative vector field. By utilizing this abstract setting, we derive an optimal temporal error analysis for the two most common choices of domain decomposition based integrators. Namely, alternating direction implicit schemes and additive splitting schemes of first and second order. For the standard first-order additive splitting scheme we also extend the error analysis to semilinear evolution equations, which may only have mild solutions.}
{domain decomposition; convergence order; additive splitting schemes; alternating direction implicit schemes; parabolic equations; semilinear evolution equations.}
\end{abstract}

\section{Introduction}


The numerical solution of partial differential equations (PDEs) often require large-scale computations on parallel and distributed computing systems. Domain decomposition methods resolve the issue by separating the computational domain into a family of subdomains. Then, an iteration is usually carried out where, in every step, the PDE is solved on each subdomain after which each such solution is communicated to the neighbouring subdomains (only). Due to the independence of the decomposed PDEs and the lack of global communication domain decomposition methods are ideal for parallel and distributed computing. See \cite{Mathew2008,Quarteroni1999} and \cite{Toselli2004} for general surveys.

When domain decomposition methods are used in the context of parabolic PDEs the classical approach is to first approximate the PDE by a standard implicit time discretization method. Then, for the series of resulting stationary problems, a domain decomposition method is applied at each time level, either as an iterative stationary equation solver or as a preconditioner for an iterative solver.

In this paper, however, we consider domain decomposition operator splittings (DDOSs), also know as regionally-additive schemes, which were introduced by, e.g., \cite{Vabishchevich1989}. In contrast to classical domain decompositions a DDOS do not require iterations at each time step, i.e.\ a reduction in the computational cost of the overall solution procedure may be achieved. To construct a DDOS consider a PDE defined on a domain $\Omega$. Furthermore, consider a family of overlapping subdomains $\{\Omega_k\}_{k=1}^q$ of $\Omega$ and let $\delta$ denote the characteristic length of the overlaps. Define on this family a partition of unity $\{\chi_k\}_{k=1}^q$ such that each function $\chi_k$ is continuous and vanishes except on the corresponding subdomain $\Omega_k$. These weight functions are then used to formulate the domain decomposition as an operator splitting where each operator represents the vector field of the PDE on the corresponding subdomain. The discretization in time is then given by employing any of the great number of splitting methods available in the literature.    

Generally, splitting methods are time stepping schemes for the temporal discretization of evolution equations
\begin{equation}
\dot u = Au = \sum_{k=1}^q A_ku, \quad u(0) = \eta,
\label{eq:evolution_equation}
\end{equation}
for some (possibly unbounded) operators $A_1,\dots,A_q$. These methods may be efficient whenever the subproblems $\dot u = A_k u$ are less costly to solve (numerically or exactly) than the original problem $\dot u = Au$. For DDOSs the numerical solution of any subproblem entails approximately solving the original PDE on a subdomain only, therefore the formulation of domain decompositions as operator splittings is a promising basis for efficient schemes. For general surveys on splitting methods we refer to \cite{Hundsdorfer,Marchuk,McLachlan} and \cite{Hairer}.

We will pay extra attention to two classes of splitting methods: the alternating direction implicit (ADI) schemes and the additive splitting schemes (also known as sum splittings or summarized approximation schemes). The former since they have been widely used in the context of DDOSs and the latter since they are extra suitable for DDOSs implemented on parallel hardware. 

The ADI methods were first introduced by \cite{Douglas1955} and \cite{Peaceman}. An overview is given by \cite{Hundsdorfer}, see also \cite{Hansen2013,Hundsdorfer1989} and \cite{Lions} for convergence studies. These schemes are most often used for two-operator splittings, i.e., $q = 2$, where the first-order Douglas--Rachford scheme
\begin{equation}
S_h = (I - h A_2)\inv (I - h A_1)\inv (I + h^2 A_1 A_2),
\label{eq:DR}
\end{equation}
is one of the most well-known. Here $S_h$ denotes the numerical flow of the scheme, i.e., a time step of size $h$ is given by applying $S_h$. The ADI schemes were first used in the context of DDOSs by \cite{Vabishchevich1989} and they have since been extensively studied in this context, see e.g.\ \cite{Arraras2015,Mathew1998,Vabishchevich2008} and the references therein. When the scheme \eqref{eq:DR} is applied to parabolic PDEs it exhibits excellent stability properties and a favorable local error structure. In the context of DDOSs, when assuming that the functions $\chi_k$ and the solution $u$ are smooth enough, the leading error term of the scheme \eqref{eq:DR} is of size $\Ordo(h/\delta)$. This should be compared with the Lie fractional step splitting method which has a leading local error term of size $\Ordo(h/\delta^3)$, cf.\ \cite{Hansen2013} and \cite{Mathew1998}. However, ADI schemes with more than two operators often exhibit stability issues when applied to parabolic PDEs, especially for advection dominated problems, cf.\ \cite{Douglas1964} and \cite[Section~IV.3.2]{Hundsdorfer}. 
In the presence of a spatial discretization this implies a CFL time step restriction. Furthermore, the required regularity of the solution $u$ (and the weights $\chi_k$ in the context of DDOSs) increases with the number of operators, cf.\ \cite[Theorem~4.5]{Mathew1998}.

On the other hand the first-order additive splitting scheme
\begin{equation}
S_h = \frac1q \sum_{k=1}^q (I-hqA_k)\inv
\label{eq:sum_splitting}
\end{equation}
suffers from none of these adverse effects; We will see that this scheme is stable for parabolic PDEs independently of the number of operators $q$ considered and that the structure of the truncation error is also independent of $q$. Furthermore, additive schemes have a great potential for efficient implementation on parallel hardware as the actions of the resolvents $(I-hqA_k)\inv$ can be computed in parallel. Finally the scheme can easily be extended to semilinear PDEs. As a higher-order alternative to the scheme \eqref{eq:sum_splitting} we will also analyze a second-order additive splitting scheme based on the fractional step Crank--Nicolson method, cf.\ \cite[Section~IV.2.2]{Hundsdorfer} and \cite{Swayne1987}. Early references to additive splitting schemes include \cite{Coron1982} and \cite{Gordeziani1974}. For contemporary studies in the context of DDOSs we refer to \cite{Samarskii2013,Vabishchevich2013} and the references therein.

For completeness we mention also the widely used exponential splitting schemes.
These methods are good choices whenever the exact solution of each subproblem $\dot u = A_k u$ can be easily computed. However, this is generally not the case for DDOSs due to the presence of the weight functions $\chi_k$. Thus, we will not discuss exponential splitting schemes any further.

Splitting schemes for parabolic PDEs are often analyzed in the abstract framework of maximal dissipative operators. Due to the abstract setting such analysis results apply not only to parabolic PDEs, but rather to a wide range of evolution equations. Furthermore, these results are independent of spatial discretizations, i.e.\ if a space discretization is subsequently applied there will be no CFL time step restrictions. In the literature there are some partial analyses regarding splitting schemes for dissipative evolution equations in the context of DDOSs. These are, however, performed in finite dimensional spaces with linear bounded operators given after space discretizations. 
Convergence order results with constants depending on $\norm{A_k}$ -- which implies CFL conditions -- are given by \cite{Samarskii2013}. Further stability studies for additive splitting schemes and for fractional step methods are carried out by \cite{Vabishchevich2013} and \cite{Portero2010}, respectively. In the latter the vector fields may be nonlinear but are assumed to be continuously differentiable.


However, to the best of our knowledge, DDOSs of parabolic PDEs have still not been analyzed without first applying a space discretization. Thus, the first goal of this paper is the formulation of DDOSs in the framework of maximal dissipative operators using weighted Sobolev spaces. The second goal is to perform optimal convergence order analyses for the aforementioned ADI schemes and additive splitting schemes in this framework. 
Furthermore, for the additive scheme \eqref{eq:sum_splitting} we will also extend the analysis to semilinear evolution equations, which may only have mild solutions. Two semilinear example applications are given in the context of DDOSs: parabolic problems with nonlinear reaction terms and a phase-field model. Finally, we emphasize that, due to the generality of the dissipative framework, our error analysis applies beyond parabolic PDEs and DDOSs.


\section{Domain decomposition operator splittings} \label{sec:domain_decomposition}

In this section we demonstrate how to formulate domain decomposition operator splittings in the framework of maximal dissipative operators using a linear diffusion-advection-reaction equation as a model problem. That is, we will formulate the DDOS of this parabolic PDE as a dissipative evolution equation of the type \eqref{eq:evolution_equation}.


Let $d$ be a positive integer and let $\Omega \subset \R^d$ be an open and bounded Lipschitz domain such that $\partial\Omega \in \CC^2$.
(See also Remark~\ref{rem:convex_domain} on domains with non-smooth boundaries.) 
Consider the diffusion-advection-reaction equation
\begin{equation} 
\left\{ \begin{aligned}
&\dot u = Au = \nabla \cdot (\lambda \nabla u) - \rho \cdot \nabla u - \sigma u, \quad &&\text{in } \Omega\times (0,T], \\
&Bu = 0, \quad &&\text{on } \partial\Omega\times (0,T], \\
&u(\cdot,0) = \eta, \quad &&\text{in } \Omega,
\end{aligned} \right.
\label{eq:rda}
\end{equation}
where the scalar function $\sigma$ and the elements of the $d$-vector $\rho$ and the $d\times d$-matrix $\lambda$ fulfill the following regularity assumptions: $\sigma, \rho_i \in L^\infty(\Omega)$ and $\lambda_{i,j} \in \CC^1(\widebar\Omega)$ for all $i,j = 1,\dots, d$.
Furthermore, we assume that the uniform ellipticity condition is fulfilled, i.e.\ there is a constant $\lambda_0 > 0$ such that
\[ \sum_{i,j = 1}^d {\lambda_{i,j}(x) \xi_i \xi_{j}} \geq \lambda_0|\xi|^2, \quad \text{for all } x \in \Omega \text{ and } \xi \in \R^d, \]
where $|\cdot|$ denotes the Euclidean norm in $\R^d$. We consider either homogeneous Dirichlet or Neumann problems. In the former case $B$ is the identity meaning $Bu = u = 0$ on $\partial\Omega \times (0,T]$. In the latter case we have
\[ Bu = (\lambda \nabla u) \cdot n = 0, \quad \text{on } \partial\Omega \times (0,T], \]
where $n$ is the unit outer normal on $\partial\Omega$. 

\begin{remark}
We do not need to assume any bounds on $\rho$ and $\sigma$. Instead, as we shall see in Lemma~\ref{lem:max_diss} and the theorems of Section~\ref{sec:abstract_convergence}, any badly behaved advections or reactions will end up as time step restrictions. However, these restrictions are generally not problematic since they are independent of domain decompositions and possible space discretizations.
\end{remark}

\subsection{Partition of unity} \label{sec:partition_of_unity}

Assume that the domain $\Omega$ is the union of a collection $\{\Omega_k\}_{k=1}^q$ of open subdomains where any pair of subdomains may overlap. Assume further that there is a partition of unity $\{\chi_k\}_{k=1}^q$ subordinate to this cover such that for all $k = 1,\dots, q$, we have
\begin{equation}
\chi_k \in W^{1,\infty}(\Omega), \quad 0 \leq \chi_k \leq 1, \quad \sum_{k=1}^q \chi_k = 1 \quad \text{and} \quad \supp(\chi_k) \subset \widebar\Omega_k.
\label{eq:chi_k}
\end{equation}
Since $W^{1,\infty}(\Omega) \subset \CC(\widebar\Omega)$ all relations in \eqref{eq:chi_k} hold pointwise, cf.\ \cite[Theorem~5.8.4]{Evans2010}. 
The domain decomposition operator splitting is then formally given by
\begin{equation}
Av = \sum_{k=1}^q A_k v, \quad \text{where} \quad A_k v = \nabla \cdot (\chi_k \lambda \nabla v) - \chi_k \rho \cdot \nabla v - \chi_k \sigma v,
\label{eq:A_k_PoU_pre_def}
\end{equation}
for $k = 1,\dots, q$. In Section~\ref{sec:analytic_setting} we give proper variational definitions for $A_1, \dots, A_q$ and $A$.
\begin{figure}[!t]
\centering
\subfigure{
		\stackinset{l}{0mm}{t}{0mm}{\bfseries(a)}{\includegraphics[height=.147\paperheight]{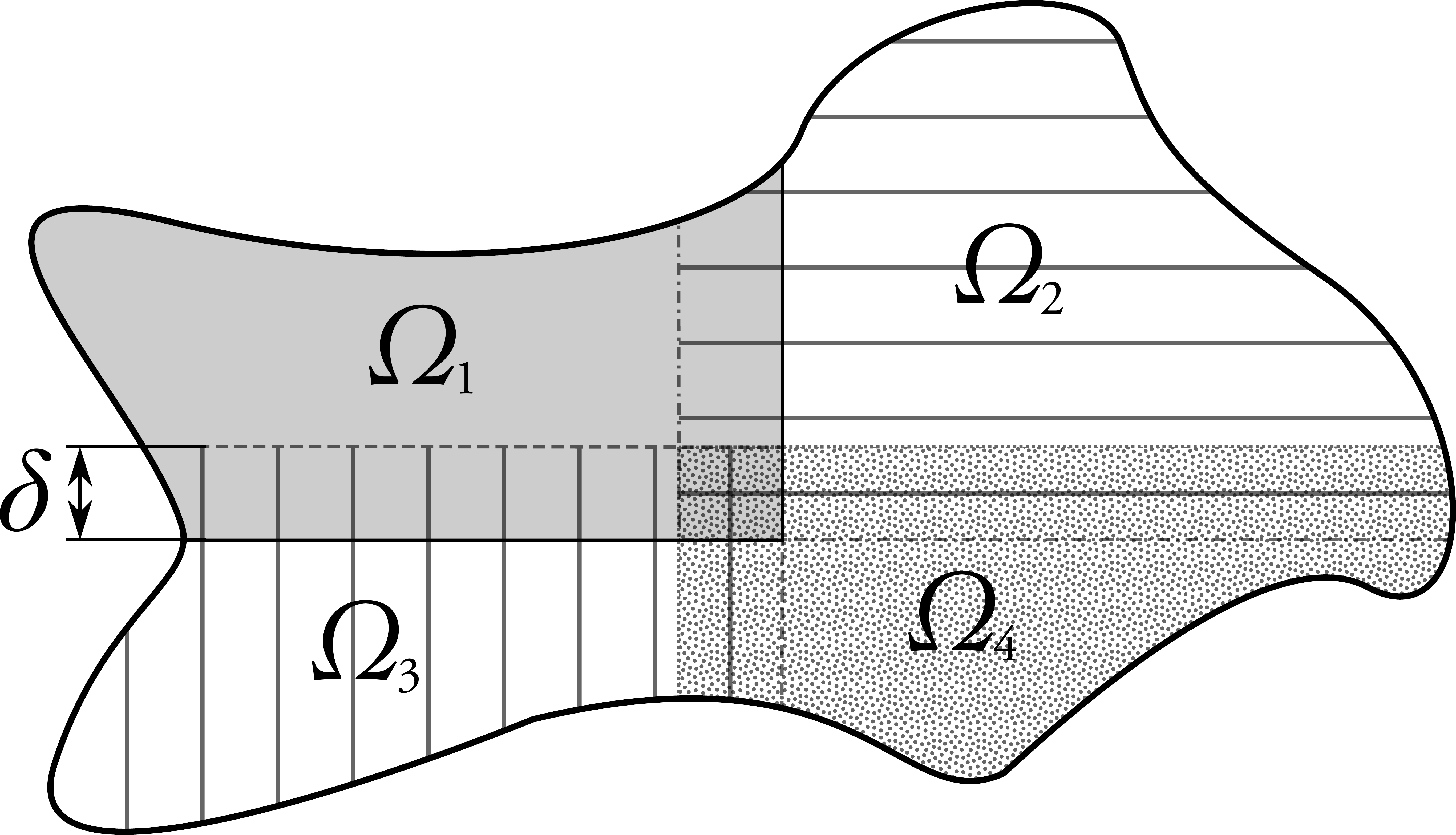}}}
\subfigure{
		\stackinset{l}{0mm}{t}{0mm}{\bfseries(b)}{\includegraphics[height=.147\paperheight]{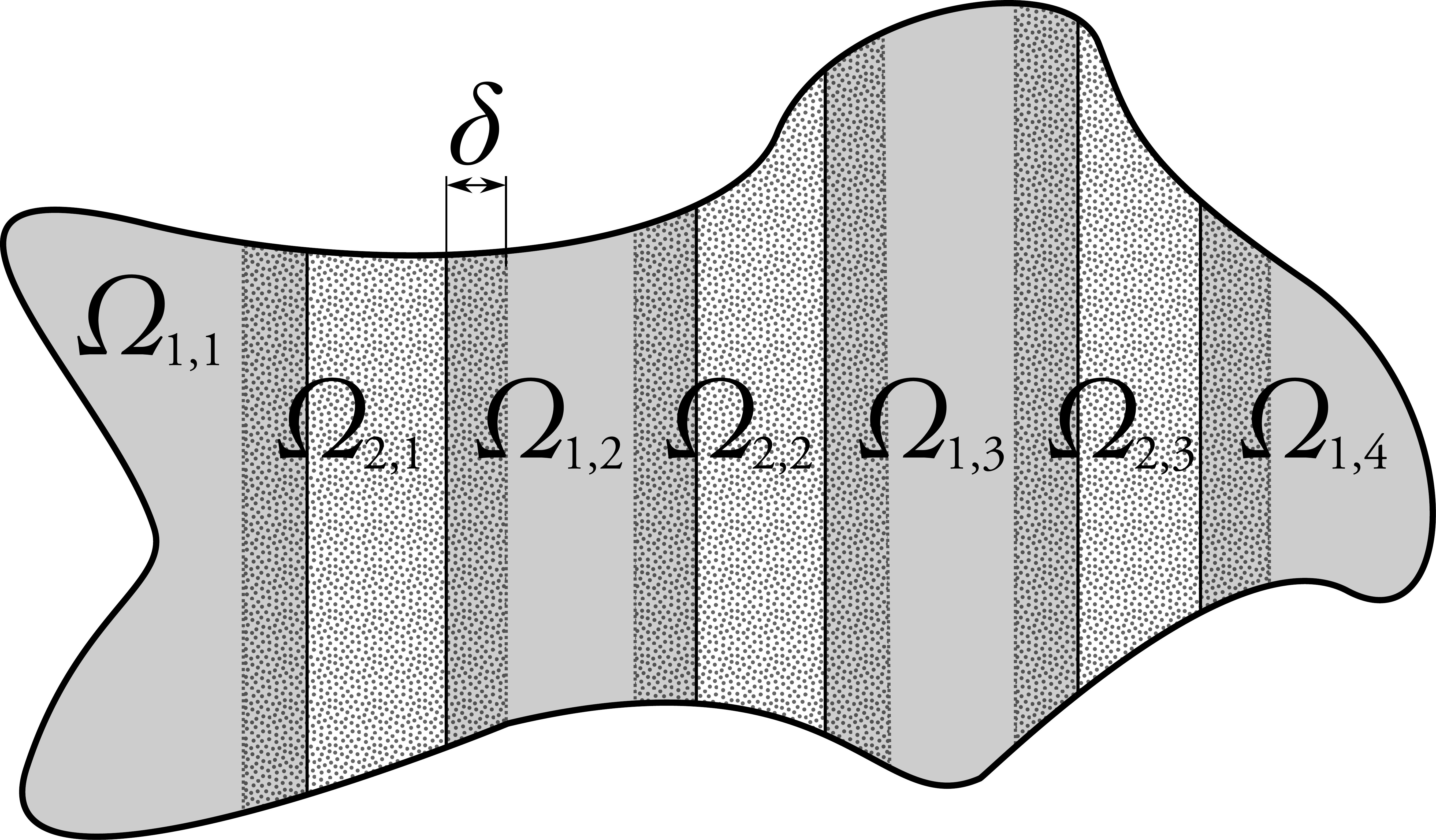}}}
\subfigure{
		\stackinset{l}{0mm}{t}{0mm}{\bfseries(c)}{\includegraphics[height=.147\paperheight]{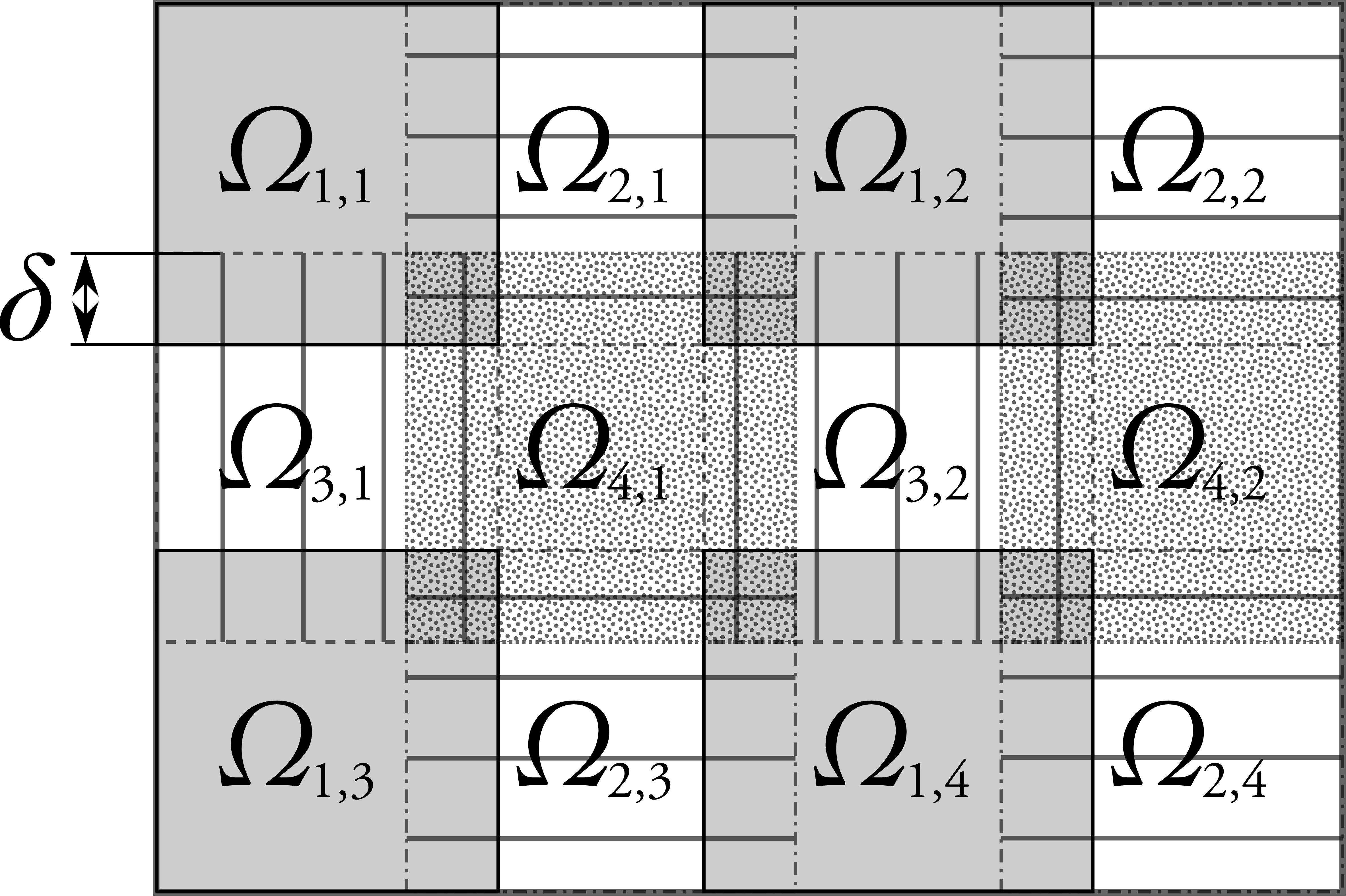}}}
\caption{The figure shows three examples of overlapping decompositions. In subfigure (a) a decomposition into four subdomains is given on a domain with a smooth boundary. This means that the resulting operator splitting will consist of four operators $A_1, \dots, A_4$ defined by the four weight functions $\chi_1, \dots, \chi_4$. Whereas the additive splitting scheme \eqref{eq:sum_splitting} can treat any number of operators, the Douglas--Rachford scheme \eqref{eq:DR} is only stable for two operators. Thus, we also give an example decomposition with two subdomains, $\Omega_1$ and $\Omega_2$, in subfigure (b). To achieve an efficient domain decomposition each of the two subdomains is constructed as the union of a family of disjoint sets, $\Omega_k = \bigcup_\ell \Omega_{k,\ell}$. Here, this is done by an overlapping stripes decomposition. Note that, since $\Omega_{1,\ell}$ and $\Omega_{1,\ell'}$ are disjoint for all $\ell \neq \ell'$, computations of the action of the resolvent $(I-hA_1)\inv$ can easily be parallelized. Of course the same holds for the operator $A_2$. Finally, subfigure (c) shows an example of a decomposition with four subdomains where each subdomain is the union of a family of disjoint sets. This is a widely used decomposition on rectangular domains. Note that more operators grant a greater flexibility which is even more essential for problems with higher dimensionality. For details we refer to \cite[Section~2.5.1]{Toselli2004} and \cite[Section~1.5.6]{Quarteroni1999}.}%
\label{fig:domain_decomposition}%
\end{figure}

We refer to \cite[Section~3.2]{Arraras2015} and \cite[Section~4.1]{Mathew1998} for details on the construction of domain decompositions $\{\Omega_k\}_{k=1}^q$ and partitions of unity $\{\chi_k\}_{k=1}^q$. Of particular interest is \cite[p.~920]{Mathew1998} which outlines a simple procedure for constructing piecewise smooth partitions that fulfill all the requirements of \eqref{eq:chi_k}.
See also Proposition~4.1 in the same reference asserting the existence of infinitely differentiable partitions of unity for which the $r$th-order derivatives of each $\chi_k$ is of size $\Ordo(1/\delta^r)$, where $\delta$ is the characteristic length of the overlaps.

In Figure~\ref{fig:domain_decomposition} we give three example domain decompositions, which are crucial for the splitting schemes considered in this paper.

\subsection{Analytic setting} \label{sec:analytic_setting}

In this subsection we properly define the linear operators $A_1, \dots, A_q$ and $A$ and prove that they are maximal dissipative. To this end we first define the framework of maximal dissipativity. We do this for general Hilbert spaces and operators since we in Section~\ref{sec:abstract_convergence} will consider a more general setting than domain decomposition operator splittings of the linear parabolic PDE \eqref{eq:rda}.
\begin{definition_} \label{def:maximal_dissipative}
Let $\HH$ be a real Hilbert space with inner product $(\cdot,\cdot)$ and induced norm $\norm{\cdot}$. A (possibly) nonlinear operator $G: \DD(G) \subset \HH \rightarrow \HH$ is maximal (shift) dissipative on $\HH$ if there exists a constant $M[G] \geq 0$ such that $G$ fulfills the dissipativity condition
\begin{equation}
(Gv - Gw, v-w) \leq M[G] \norm{v-w}^2, \quad \text{for all } v,w \in \DD(G),
\label{eq:dissipativity}
\end{equation}
and the range condition
\[ \RR(I-hG) = \HH, \quad \text{for all } h > 0 \text{ such that } hM[G] < 1. \]
\end{definition_}

We will define the linear operator $A$ of \eqref{eq:rda} on a subspace of $H^1_0(\Omega)$ or $H^1(\Omega)$ such that it is maximal dissipative on $L^2(\Omega)$, a construction that is commonly known as the Friedrichs extension, cf.\ \cite[Section~31.4]{Zeidler1990IIB}. However, there is no subspace of $H^1_0(\Omega)$ or $H^1(\Omega)$ on which the operators $A_1, \dots, A_q$ of \eqref{eq:A_k_PoU_pre_def} can be defined such that they are maximal on $L^2(\Omega)$. Instead we will consider weighted Sobolev spaces, see \cite{Kufner1980} and \cite[Section~3.2]{Triebel1978} for an introductory reading.



For each $k = 1,\dots, q$ we define the inner products
\[ (\cdot, \cdot)_{V_k} = (\cdot, \cdot)_{L^2(\Omega)} + (\chi_k\nabla\cdot, \nabla\cdot)_{L^2(\Omega)} \]
on $\CC^1(\widebar\Omega)$, where $(\cdot, \cdot)_{L^2(\Omega)}$ denotes the inner product on $L^2(\Omega)$. Then, we let $H^1_0(\Omega;\chi_k)$ respectively $H^1(\Omega;\chi_k)$ be the completion of $\CC_0^\infty(\Omega)$ respectively $\CC^1(\widebar\Omega)$ in the norm
\[ \norm{\cdot}_{V_k} = \sqrt{(\cdot,\cdot)_{V_k}}. \]
The spaces $H^1_0(\Omega;\chi_k)$ are $H^1(\Omega;\chi_k)$ are complete by construction and it is straightforward to show that
\[ H^1_0(\Omega) \hookrightarrow H^1_0(\Omega; \chi_k) \quad \text{and} \quad H^1(\Omega) \hookrightarrow H^1(\Omega; \chi_k), \]
for $k = 1,\dots, q$, cf.\ \cite[Theorem~3.22]{Adams}. If the PDE \eqref{eq:rda} is equipped with Dirichlet boundary conditions we let $V = H^1_0(\Omega)$, $\CCC = \CC_0^\infty(\Omega)$ and $V_k = H^1_0(\Omega; \chi_k)$, for $k = 1,\dots,q$. If the equation has Neumann conditions we instead let $V = H^1(\Omega)$, $\CCC = \CC^1(\widebar\Omega)$ and $V_k = H^1(\Omega; \chi_k)$. By construction $\CCC$ is dense in each $V_k$ (and in $V$). Further, by identifying $L^2(\Omega)$ with its continuous dual space we get the following chain of imbeddings
\[ V \hookrightarrow  V_k \hookrightarrow L^2(\Omega) \hookrightarrow V_k^* \hookrightarrow  V^*, \]
for $k = 1,\dots, q$, where $V^*$ and $V_k^*$ denotes the continuous dual spaces of $V$ and $V_k$, respectively.


Now, for $k = 1,\dots, q$, define on $\CCC$ the bilinear forms
\[ \begin{aligned}
&b(v,w) = \int_\Omega (\lambda \nabla v) \cdot \nabla w + (\rho \cdot \nabla v) w + \sigma vw \dx \quad \text{and} \\ 
&b_k(v,w) = \int_\Omega (\chi_k \lambda\nabla v) \cdot \nabla w + (\chi_k\rho \cdot \nabla v) w + \chi_k\sigma vw \dx, 
\end{aligned} \]
which are bounded in $\norm{\cdot}_V$ respectively $\norm{\cdot}_{V_k}$, cf.\ \cite[Theorems 7.2.2 and 7.4.1]{Hackbusch}. Then, $b$ and $b_k$ can be uniquely extended to bounded bilinear forms on $V$ and $V_k$, respectively.
Moreover, by first letting $v$ be in $\CCC$ and then using the density of $\CCC$ in $V_k$ and the continuity of the forms $b_k$ we get
\begin{equation}
b_k(v,v) \geq \left(\sigma_0 - \frac{P^2}{2\lambda_0}\right) (\chi_kv,v)_{L^2(\Omega)} + \frac{\lambda_0}{2} (\chi_k\nabla v,\nabla v)_{L^2(\Omega)}, \quad \text{for all } v \in V_k, \text{ for } k = 1,\dots, q,
\label{eq:a_shift_coercive_a_k}
\end{equation}
by a simple modifications of the proof of \cite[Theorem~5.6.8]{Brenner}. Here we use the notation $P^2 = \sum_{i=1}^d\norm{\rho_i}_{L^\infty(\Omega)}^2$ and $\sigma_0 = \essinf_{x\in \Omega} \sigma(x)$. A similar inequality, but without any weight functions $\chi_k$, holds for the form $b$ as a direct consequence of the aforementioned theorem.

From the bilinear forms we define the energetic operators
\[ \hat A: V \rightarrow V^*, \quad \hat Av = -b(v,\cdot) \quad\text{and}\quad \hat A_k: V_k \rightarrow V_k^*, \quad \hat A_kv = -b_k(v,\cdot), \]
for $k = 1,\dots, q$.
Then, for any $h > 0$ such that $h (P^2/(2\lambda_0) - \sigma_0) < 1$, we get from \eqref{eq:a_shift_coercive_a_k} the lower bounds
\[ \begin{aligned}
&\fp{(I - h\hat A_k)v}{v} = \norm{v}_{L^2(\Omega)}^2 + h b_k(v,v) \\
	&\quad \geq \norm{v}_{L^2(\Omega)}^2 + h\left(\sigma_0-\frac{P^2}{2\lambda_0}\right) (\chi_kv,v)_{L^2(\Omega)} + h\frac{\lambda_0}{2} (\chi_k\nabla v,\nabla v)_{L^2(\Omega)} \geq C\norm{v}_{V_k}^2, \quad \text{for all } v \in V_k,
\end{aligned} \]
for $k = 1,\dots, q$, where $C$ is some positive constant. Similarly, $\fp{(I - h\hat A)v}{v}$ is bounded from below by $C\norm{v}_{V}^2$ for all $v$ in $V$. Thus, for all $h > 0$ such that $h (P^2/(2\lambda_0) - \sigma_0) < 1$ the operators $I-h\hat A_1, \dots, I-h\hat A_q$ and $I-h\hat A$ are coercive and since they are also bounded they are all bijective according to Lax--Milgram's lemma.

Finally we define the the Friedrichs extensions $A:\DD(A) \subset L^2(\Omega) \rightarrow L^2(\Omega)$ and $A_k: \DD(A_k) \subset L^2(\Omega) \rightarrow L^2(\Omega)$ as the following restrictions of the corresponding energetic operators
\begin{align}
&\left\{ \begin{aligned}
&\DD(A) = \{v \in V;\,\, \exists\ z \in L^2(\Omega) \text{ with } (z,w) = \fp{\hat Av}{w}\ \forall\ \! w \in V\}, \\
&Av = z,\ \forall\ \! v \in \DD(A),
\end{aligned} \right. \label{eq:A_def} \\
&\left\{ \begin{aligned}
&\DD(A_k) = \{v \in V_k;\,\, \exists\ z \in L^2(\Omega) \text{ with } (z,w) = \fp{\hat A_kv}{w}\ \forall\ \! w \in V_k\},\\
&A_kv = z,\ \forall\ \! v \in \DD(A_k),
\end{aligned} \right. \label{eq:A_k_def} 
\end{align}
for $k = 1,\dots, q$. 
From the construction it is clear that the operators $I-hA_1,\dots, I-hA_q$ and $I-hA$ are bijective when their energetic counter parts are and thus, for $h > 0$ such that $h (P^2/(2\lambda_0) - \sigma_0) < 1$, we have $\RR(I-hA_1) = \dots = \RR(I-hA_q) = \RR(I-hA) = L^2(\Omega)$. Finally, from \eqref{eq:a_shift_coercive_a_k} we get
\[ (A_kv,v) = -b_k(v,v) \leq - \left(\sigma_0 - \frac{P^2}{2\lambda_0}\right) (\chi_kv,v)_{L^2(\Omega)} -\frac{\lambda_0}{2} (\chi_k\nabla v,\nabla v)_{L^2(\Omega)} \leq \left(\frac{P^2}{2\lambda_0} - \sigma_0\right) \norm{v}_{L^2(\Omega)}^2, \]
for all $v \in \DD(A_k)$, for $k = 1,\dots, q$ and similarily for $A$. We have thus proven the following lemma.
\begin{lemma} \label{lem:max_diss}
The linear operators $A_1: \DD(A_1) \subset L^2(\Omega) \rightarrow L^2(\Omega),\, \dots,\, A_q: \DD(A_q) \subset L^2(\Omega) \rightarrow L^2(\Omega)$ and $A:\DD(A) \subset L^2(\Omega) \rightarrow L^2(\Omega)$ defined by \eqref{eq:A_def} and \eqref{eq:A_k_def} are all maximal dissipative on $L^2(\Omega)$ with $M[A_1], \dots, M[A_q]$ and $M[A]$ bounded by $\max\{0,\, P^2/(2\lambda_0) - \sigma_0\}$.
\end{lemma}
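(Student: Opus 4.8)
The plan is to verify directly the two conditions of Definition~\ref{def:maximal_dissipative} for each operator $A_k$, taking $M[A_k] = \max\{0,\, P^2/(2\lambda_0) - \sigma_0\}$, and then to observe that the argument for $A$ is word-for-word the same once the weights $\chi_k$ are dropped (using the unweighted analogue of \eqref{eq:a_shift_coercive_a_k} and the spaces $V \hookrightarrow L^2(\Omega) \hookrightarrow V^*$). Most of the analytic work has in fact already been done in the construction preceding the statement; what remains is to package it correctly.

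First, the dissipativity inequality. Since $A_k$ is linear, subtracting reduces \eqref{eq:dissipativity} to the bound $(A_kv,v) \le M[A_k]\norm{v}_{L^2(\Omega)}^2$ for all $v \in \DD(A_k)$. By the definition \eqref{eq:A_k_def} we have $(A_kv,v) = \fp{\hat A_kv}{v} = -b_k(v,v)$, so I would insert this into the coercivity estimate \eqref{eq:a_shift_coercive_a_k}, discard the nonpositive term $-\tfrac{\lambda_0}{2}(\chi_k\nabla v,\nabla v)_{L^2(\Omega)}$ using $\chi_k \ge 0$, and then use $0 \le (\chi_k v, v)_{L^2(\Omega)} \le \norm{v}_{L^2(\Omega)}^2$ (which holds since $0 \le \chi_k \le 1$). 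Distinguishing the two signs of $P^2/(2\lambda_0) - \sigma_0$ gives $(A_kv,v) \le \max\{0,\, P^2/(2\lambda_0) - \sigma_0\}\norm{v}_{L^2(\Omega)}^2$ in either case, which is exactly \eqref{eq:dissipativity} with the claimed constant.

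Second, the range condition $\RR(I - hA_k) = L^2(\Omega)$ for every $h > 0$ with $hM[A_k] < 1$. I would first note that this set of admissible $h$ is contained in $\{h > 0 : h(P^2/(2\lambda_0) - \sigma_0) < 1\}$ (trivially so when $P^2/(2\lambda_0) - \sigma_0 \le 0$, since then $M[A_k]=0$ and both conditions are vacuous, and with equality otherwise). For such $h$ the construction above already shows that the energetic operator $I - h\hat A_k : V_k \to V_k^*$ is bounded and coercive, hence bijective by Lax--Milgram. The remaining step is the descent to the Friedrichs realization on $L^2(\Omega)$: given $f \in L^2(\Omega) \hookrightarrow V_k^*$, set $v := (I - h\hat A_k)\inv f \in V_k$; then, testing against $w \in V_k$ and using the identification of $L^2(\Omega)$ with its dual, $\fp{\hat A_kv}{w} = (\tfrac1h(v-f), w)_{L^2(\Omega)}$ with $\tfrac1h(v-f) \in L^2(\Omega)$, so the defining property in \eqref{eq:A_k_def} holds, $A_kv = \tfrac1h(v-f)$, and $(I-hA_k)v = f$. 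Hence $\RR(I - hA_k) = L^2(\Omega)$.

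The step I expect to require the most care is this last descent, since it is precisely where the weighted space $V_k$ is indispensable -- on $H^1_0(\Omega)$ or $H^1(\Omega)$ no maximal realization of $A_k$ exists -- and where one must confirm that the element $z$ in \eqref{eq:A_k_def} genuinely lies in $L^2(\Omega)$ (here immediate, since both $v$ and $f$ do). Once the bijectivity of $I - h\hat A_k$ is transferred in this way, reading off the explicit constant in \eqref{eq:a_shift_coercive_a_k} pins down $M[A_k]$ as claimed, and rerunning the whole argument with $b$, $\hat A$, $A$ in place of $b_k$, $\hat A_k$, $A_k$ yields the statement for $A$.
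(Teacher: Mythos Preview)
Your proposal is correct and follows essentially the same route as the paper: the proof there is the text preceding the lemma, which uses \eqref{eq:a_shift_coercive_a_k} together with Lax--Milgram on $I-h\hat A_k$ and the definition \eqref{eq:A_k_def} to obtain the range condition, and then reads off dissipativity from $(A_kv,v)=-b_k(v,v)$ exactly as you do. If anything you are slightly more careful than the paper in distinguishing the two signs of $P^2/(2\lambda_0)-\sigma_0$ and in spelling out the descent from $\hat A_k$ to $A_k$, but the structure is identical.
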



The domain $\DD(A)$ in \eqref{eq:A_def} can be further characterized using classical elliptic regularity results. Under the assumed regularity on $\partial\Omega$ and the coefficients $\lambda_{i,j}$, $\rho_i$, and $\sigma$ we have
\begin{equation}
\DD(A) = H^2(\Omega) \cap H^1_0(\Omega)
\label{eq:D(A)_Dirichlet}
\end{equation}
when $V = H^1_0(\Omega)$ and
\begin{equation}
\DD(A) = \{v \in H^2(\Omega);\, \tr(\lambda \nabla v) \cdot n = 0\}
\label{eq:D(A)_Neumann}
\end{equation}
when $V = H^1(\Omega)$. Here $\tr$ denotes the standard linear and continuous trace mapping $H^1(\Omega)$ into $L^2(\partial\Omega)$, see \cite[Section~5.5]{Evans2010} for a definition. The characterization \eqref{eq:D(A)_Dirichlet} is given in \cite[Theorem~9.1.16]{Hackbusch}. For \eqref{eq:D(A)_Neumann} we get from \cite[Theorem~9.1.17]{Hackbusch} that $\DD(A) \subset H^2(\Omega)$ and the trace condition can be proven by an argument similar to those given in \cite[Theorem~1.2]{Temam1979} and \cite[p.~65]{Temam1997}. 

With the above elliptic regularity results we are ready to prove the following lemma asserting that the domain decomposition operator splitting is well-defined.
\begin{lemma} \label{lem:splitting}
The domains defined by \eqref{eq:A_def} and \eqref{eq:A_k_def} fulfill $\DD(A) \subset \bigcap_{k=1}^q \DD(A_k)$ and the equation $Av = \sum_{k=1}^q A_kv$ holds for all $v \in \DD(A)$. 
\end{lemma}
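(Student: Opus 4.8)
The plan is to argue via the variational (energetic) characterisations of the domains, using the elliptic regularity statement \eqref{eq:D(A)_Dirichlet}–\eqref{eq:D(A)_Neumann} to gain enough smoothness on elements of $\DD(A)$. Fix $v \in \DD(A)$. By the characterisation of $\DD(A)$ we know $v \in H^2(\Omega) \cap V$; in particular $v$, $\nabla v$ and the second derivatives of $v$ are all in $L^2(\Omega)$, and in the Neumann case the boundary identity $\tr(\lambda\nabla v)\cdot n = 0$ holds in $L^2(\partial\Omega)$. First I would verify the inclusion $v \in \DD(A_k)$ for each fixed $k$: since $V \hookrightarrow V_k$ we have $v \in V_k$, so by \eqref{eq:A_k_def} it suffices to exhibit a $z_k \in L^2(\Omega)$ with $(z_k,w) = \fp{\hat A_k v}{w} = -b_k(v,w)$ for all $w \in V_k$. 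Because $\CCC$ is dense in $V_k$ and $b_k$ is continuous on $V_k$, it is enough to produce such a $z_k$ and check the identity for $w \in \CCC$.

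The natural candidate is the classical expression $z_k = \nabla\cdot(\chi_k\lambda\nabla v) - \chi_k\rho\cdot\nabla v - \chi_k\sigma v$, which is the pointwise interpretation of $A_k v$ from \eqref{eq:A_k_PoU_pre_def}. I would first check that $z_k \in L^2(\Omega)$: expanding $\nabla\cdot(\chi_k\lambda\nabla v) = \chi_k\,\nabla\cdot(\lambda\nabla v) + (\lambda\nabla v)\cdot\nabla\chi_k$, the first summand is in $L^2(\Omega)$ since $v \in H^2(\Omega)$ and $\lambda_{i,j} \in \CC^1(\widebar\Omega)$, while the second lies in $L^2(\Omega)$ because $\nabla\chi_k \in L^\infty(\Omega)$ (recall $\chi_k \in W^{1,\infty}(\Omega)$) and $\lambda\nabla v \in L^2(\Omega)$; the remaining terms are products of $L^\infty$ functions with $L^2$ functions. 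Next, for $w \in \CCC$ I would integrate by parts in $\int_\Omega \big(\nabla\cdot(\chi_k\lambda\nabla v)\big)w\dx$. For Dirichlet data $w \in \CC_0^\infty(\Omega)$ vanishes near $\partial\Omega$, so no boundary term appears; for Neumann data $w \in \CC^1(\widebar\Omega)$ and the boundary term is $\int_{\partial\Omega} \chi_k\,(\tr(\lambda\nabla v)\cdot n)\,w \,\mathrm{d}s$, which vanishes precisely because $\tr(\lambda\nabla v)\cdot n = 0$. In either case one arrives at $(z_k,w) = -b_k(v,w)$, hence $v \in \DD(A_k)$ and $A_k v = z_k$.

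Summing over $k$ and using the partition of unity properties $\sum_k \chi_k = 1$ and $\sum_k \nabla\chi_k = 0$ (the latter holding pointwise since the $\chi_k$ are continuous and the sum is constant), the cross terms $(\lambda\nabla v)\cdot\nabla\chi_k$ cancel and $\sum_{k=1}^q z_k = \nabla\cdot(\lambda\nabla v) - \rho\cdot\nabla v - \sigma v$, which is exactly $Av$ by the same integration-by-parts argument applied to the unweighted forms $b$, $\hat A$ (this is also immediate from $\fp{\hat Av}{w} = \sum_k \fp{\hat A_k v}{w}$ on $\CCC$ together with the definitions). This gives both claimed assertions.

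The main obstacle I anticipate is the rigorous handling of the boundary terms in the Neumann case: one must justify that $\nabla\cdot(\chi_k\lambda\nabla v)$, though only in $L^2(\Omega)$, still admits an integration-by-parts (Green's) formula against $w \in \CC^1(\widebar\Omega)$ with a well-defined normal-trace boundary integral. This is a standard density/trace argument — approximate $v$ in $H^2(\Omega)$ by smooth functions and pass to the limit, using continuity of the trace map and of all the forms involved — but it is the one place where the regularity $v \in H^2(\Omega)$ from \eqref{eq:D(A)_Neumann} and the condition $\lambda_{i,j}\in\CC^1(\widebar\Omega)$ are genuinely needed, and where care is required to see that the weight $\chi_k \in W^{1,\infty}(\Omega) \subset \CC(\widebar\Omega)$ does not spoil the boundary integral.
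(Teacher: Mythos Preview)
Your proposal is correct and follows essentially the same route as the paper's proof: exploit the $H^2$ regularity from \eqref{eq:D(A)_Dirichlet}--\eqref{eq:D(A)_Neumann}, exhibit the explicit candidate $z_k = \nabla\cdot(\chi_k\lambda\nabla v) - \chi_k\rho\cdot\nabla v - \chi_k\sigma v \in L^2(\Omega)$, verify $(z_k,w) = -b_k(v,w)$ for $w\in\CCC$ by integration by parts (with the boundary term killed by compact support in the Dirichlet case and by the conormal condition in the Neumann case), and extend by density of $\CCC$ in $V_k$. The only cosmetic difference is that the paper justifies the Green formula by approximating the vector field $\chi_k\lambda\nabla v$ in $(H^1(\Omega))^d$ rather than approximating $v$ in $H^2(\Omega)$, and it singles out the trace-product identity $\tr(\chi_k\lambda\nabla v)\cdot n = \tr(\chi_k)\,\tr(\lambda\nabla v)\cdot n$ as a separate step---precisely the point you flag as the main obstacle.
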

\begin{proof}
Note that, for $k = 1,\dots,q$ and for $v \in H^2(\Omega)$, we have $\chi_k\lambda\nabla v \in (H^1(\Omega))^d$. Thus, by density there is a sequence $(y_\ell)_{\ell=1}^\infty$ such that $y_\ell \in (\CC^1(\widebar\Omega))^d$ for all $\ell$ and $y_\ell$ tends to $\chi_k\lambda\nabla v$ as $\ell$ tends to infinity. Now, for $w \in \CCC$ the divergence theorem gives
\begin{equation}
\int_\Omega y_\ell \cdot \nabla w \dx = - \int_\Omega (\nabla \cdot y_\ell) w \dx + \int_{\partial\Omega} (\tr(y_\ell) \cdot n) \tr(w) \,\mathrm{d}S.
\label{eq:lemma_2_pf_divergence_thm}
\end{equation}

We consider first Neumann boundary conditions and note that due to the continuity of the trace we have that $\tr(y_\ell)$ tends to $\tr(\chi_k\lambda\nabla v)$ as $\ell$ tends to infinity. By using the definition of the trace and the regularity $\chi_k \in W^{1,\infty}(\Omega) \subset H^1(\Omega) \cap \CC(\widebar\Omega)$ we get from a simple density argument that
\[ \tr(\chi_k\lambda\nabla v) \cdot n = \tr(\chi_k)\tr(\lambda\nabla v) \cdot n, \quad \text{for all } v \in H^2(\Omega). \]
Thus, from the domain characterization \eqref{eq:D(A)_Neumann} we get
\[ \int_\Omega (\chi_k\lambda\nabla v) \cdot \nabla w \dx = - \int_\Omega (\nabla \cdot \chi_k\lambda\nabla v) w \dx, \quad \text{for all } v \in \DD(A),\ w \in \CC^1(\widebar\Omega), \]
when letting $\ell$ tend to infinity in \eqref{eq:lemma_2_pf_divergence_thm}. Then, by adding the advection and reaction terms we get
\begin{equation}
\begin{aligned}
b_k(v,w) &= \int_\Omega (\chi_k\lambda \nabla v) \cdot \nabla w + (\chi_k\rho \cdot \nabla v) w + \chi_k\sigma vw \dx \\
	&= - \int_\Omega \left[ \nabla \cdot (\chi_k\lambda \nabla v) - \chi_k\rho \cdot \nabla v - \chi_k\sigma v \right] w \dx, \quad \text{for all } v \in \DD(A),\ w \in \CC^1(\widebar\Omega),
\end{aligned}
\label{eq:lemma_2_pf_bilinear_form_id}
\end{equation}
which holds also for all $w \in H^1(\Omega;\chi_k)$ due to the continuity of the form $b_k$ and the density of $\CC^1(\widebar\Omega)$ in $H^1(\Omega;\chi_k)$.

For Dirichlet conditions the boundary integral in \eqref{eq:lemma_2_pf_divergence_thm} vanishes directly since $\CCC = \CC_0^\infty(\Omega)$. Thus, due to the density of $\CC_0^\infty(\Omega)$ in $H^1_0(\Omega;\chi_k)$ and the continuity of $b_k$, the equality \eqref{eq:lemma_2_pf_bilinear_form_id} holds also for Dirichlet boundary conditions, but with $w \in H^1_0(\Omega;\chi_k)$. That is, in both cases we have that for all $v\in \DD(A)$ there exists a $z \in L^2(\Omega)$, $z = \nabla \cdot (\chi_k\lambda \nabla v) - \chi_k\rho \cdot \nabla v - \chi_k\sigma v$, such that $\fp{\hat A_k v}{w} = -b_k(v,w) = (z,w)$ for all $w \in V_k$, but then $v$ is in $\DD(A_k)$ according to the definition \eqref{eq:A_k_def}. Further, with $A_k v = z$, the equality $Av = \sum_{k=1}^q A_kv$ immediately follows for all $v \in \DD(A)$.

\end{proof}

%

\begin{remark} \label{rem:convex_domain}
When the PDE \eqref{eq:rda} is equipped with homogeneous Dirichlet boundary conditions the Lemmas \ref{lem:max_diss} and \ref{lem:splitting} also hold on convex domains $\Omega$ with possibly non-smooth boundary $\partial\Omega$. We refer to \cite[Corollary~1.2.2.3]{Grisvard} and \cite[Theorem~9.1.22]{Hackbusch} for the necessary modifications.
\end{remark}

\section{Abstract convergence analyses} \label{sec:abstract_convergence}

The current section is dedicated to convergence studies of splitting schemes in the framework of maximal dissipative operators. As a direct consequence of Lemmas \ref{lem:max_diss} and \ref{lem:splitting} all results of this section apply to domain decomposition operator splittings of the diffusion-advection-reaction equation \eqref{eq:rda}. An important conclusion of this section is that the additive splitting scheme \eqref{eq:sum_splitting} is convergent for DDOSs without any further regularity assumptions other than those given on $\Omega$, $\lambda$, $\rho$, $\sigma$ and the weight functions $\chi_k$ in Section~\ref{sec:domain_decomposition}. The same holds for the considered ADI schemes when $q = 2$ and under some weak regularity assumptions on the initial data $\eta$. Furthermore, for regular enough solutions $u$ all considered ADI schemes and additive splitting schemes are either first- or second-order convergent.

In Examples \ref{ex:semilinear_parabolic} and \ref{ex:Caginalp} other domain decomposition applications are considered, which also include nonlinearities. Furthermore, as noted in the introduction, the convergence results of this section are much more general and are not limited to DDOSs, see e.g.\ \cite{Barbu} and \cite{Roubicek2013} for further applications.

%

\subsection{Preliminaries} \label{sec:maximal_dissipative}

Let $\HH$ be a real Hilbert space with inner product $(\cdot,\cdot)$ and induced norm $\norm{\cdot}$ and recall the definition of maximal dissipativity, Definition~\ref{def:maximal_dissipative}. Furthermore, for an operator $G: \DD(G) \subset \HH \rightarrow \HH$ let $L[G] \in [0,\infty]$ be the smallest constant such that
\[ \norm{Gv-Gw} \leq L[G]\norm{v-w}, \quad \text{for all } v,w \in \DD(G). \]
We call $L[G]$ the Lipschitz constant of $G$ and say that $G$ is Lipschitz continuous if $L[G] < \infty$.

A direct consequence of $G$ being maximal dissipative is that, for $h > 0$ such that $hM[G] < 1$, the resolvent $(I-hG)\inv: \HH \rightarrow \DD(G) \subset \HH$ is well-defined and its Lipschitz constant is bounded as
\begin{equation}
L[(I-hG)\inv] \leq 1/(1-hM[G]).
\label{eq:resolvent_bound}
\end{equation}


We will perform convergence analyses of splitting schemes applied to the linear evolution equation \eqref{eq:evolution_equation} where the involved operators fulfill the following two assumptions:
\begin{assumption} \label{as:dissipativity}
The linear operators $A_1:\DD(A_1) \subset \HH \rightarrow \HH,\, \dots,\, A_q:\DD(A_q) \subset \HH \rightarrow \HH$ and $A:\DD(A) \subset \HH \rightarrow \HH$ are maximal dissipative on $\HH$.
\end{assumption}
\begin{assumption} \label{as:splitting}
The domains of the operators fulfill $\DD(A) \subset \bigcap_{k=1}^q \DD(A_k)$ and the equation $A = \sum_{k=1}^q{A_k}$ holds on $\DD(A)$.
\end{assumption}

Note that under Assumption~\ref{as:dissipativity} and for $\eta \in \DD(A)$ the evolution equation \eqref{eq:evolution_equation} has a unique classical solution, i.e.\ $u\in \CC^1([0,T];\HH) \cap \CC([0,T];\DD(A))$, cf.\ \cite[Proposition~II.6.2]{Engel}.

\subsection{ADI schemes}
We first consider the evolution equation \eqref{eq:evolution_equation} split into two linear operators $A = A_1 + A_2$. For the domain decomposition applications we may achieve two-operator splittings adapted for parallel implementation by decomposing the domain into overlapping stripes, cf.\ Figure~\ref{fig:domain_decomposition}b. 

We summarize some convergence results for the Douglas--Rachford scheme \eqref{eq:DR} as well as for the second-order Peaceman--Rachford ADI scheme 
\begin{equation}
S_h = \Big(I - \frac h2 A_2\Big)\inv \Big(I + \frac h2 A_1\Big) \Big(I - \frac h2 A_1\Big)\inv \Big(I + \frac h2 A_2\Big).
\label{eq:PR}
\end{equation} 
\begin{theorem} \label{thm:PR_DR_orders}
Consider the approximate solution $S_h^m\eta$ given by applying either the Douglas--Rachford scheme \eqref{eq:DR} or the Peaceman--Rachford scheme \eqref{eq:PR} to the linear evolution equation \eqref{eq:evolution_equation}. If Assumptions \ref{as:dissipativity} and \ref{as:splitting} are valid, $h\max\{M[A_1],\, M[A_2]\} \leq 1/2$ and the solution $u$ is regular enough with $u(t) \in \DD(A)$ for all times $t \in [0,T]$, then the global error can be bounded as
\[ \norm{u(mh) - S_h^m\eta} \leq 5h^p\e^{3T(M[A_1]+M[A_2])} \sum_{i=0}^p \big\lVert A_1^{p-i}u^{(i+1)}\big\rVert_{L^1(0,T;\HH)}, \quad mh \leq T, \]
with first-order convergence, $p = 1$, for both schemes or second-order convergence, $p = 2$, for the Peaceman--Rachford scheme. Here $u^{(i+1)}$ denotes derivative number $i+1$ of $u$.
\end{theorem}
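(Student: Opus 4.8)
The plan is the classical stability-plus-consistency argument for one-step methods, organised around the telescoping identity
\[
u(mh) - S_h^m\eta \;=\; \sum_{j=1}^m S_h^{m-j}\bigl(u(jh) - S_h\,u((j-1)h)\bigr),
\]
so that the global error is bounded once I have (i) a uniform stability estimate $\norm{S_h^{\,n}v}\le\e^{3nh(M[A_1]+M[A_2])}\norm v$ on the relevant vectors $v$, and (ii) a one-step error bound of the form $\norm{u(t+h)-S_h u(t)}\le C h^{p}\int_t^{t+h}\sum_{i=0}^p\norm{A_1^{p-i}u^{(i+1)}(s)}\,\mathrm{d}s$. Throughout, the algebraic identity $(I-hA_1)(I-hA_2)=I-hA+h^2A_1A_2$ together with $(I-hA_k)^{-1}A_k=\tfrac1h\bigl((I-hA_k)^{-1}-I\bigr)$ is the workhorse, and Assumption~\ref{as:splitting} is used in the form $A_2v=Av-A_1v$ to turn every occurrence of $A_2$ on a solution value into $u''$ minus $A_1$ acting on $u'$.

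For stability I would note that each resolvent factor obeys $\norm{(I-hA_k)^{-1}}\le(1-hM[A_k])^{-1}$ by \eqref{eq:resolvent_bound}, and that each Cayley-type factor in the Peaceman--Rachford flow satisfies $\norm{(I+\tfrac h2A_k)(I-\tfrac h2A_k)^{-1}}\le\bigl((1+hM[A_k])/(1-hM[A_k])\bigr)^{1/2}$, obtained by squaring and invoking the dissipativity inequality \eqref{eq:dissipativity}. Since $h\max\{M[A_1],M[A_2]\}\le\tfrac12$, elementary estimates such as $1/(1-x)\le\e^{2x}$ on $[0,\tfrac12]$ let me bound the product of these factors by $\e^{3h(M[A_1]+M[A_2])}$; for the Peaceman--Rachford flow I would moreover rewrite $S_h^{\,n}$ by cancelling the inner $(I-\tfrac h2 A_2)^{-1}(I+\tfrac h2 A_2)$ pairs, which both certifies that the iterates (and, after checking domains, the propagated one-step errors) stay well-defined and yields $\norm{S_h^{\,n}v}\le\e^{3T(M[A_1]+M[A_2])}\norm v$ for $nh\le T$.

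The substance of the proof is the one-step error. Setting $t=(j-1)h$ and using $\dot u=Au$, I would expand the exact flow as $u(t+h)=u(t)+\int_0^h\dot u(t+\tau)\,\mathrm{d}\tau$ and put $S_h$ into its incremental form $S_h u(t)=u(t)+hR_h\,Au(t)$, where $R_h$ is the bounded product $(I-hA_2)^{-1}(I-hA_1)^{-1}$ for Douglas--Rachford and $(I-\tfrac h2A_2)^{-1}(I-\tfrac h2A_1)^{-1}$ for Peaceman--Rachford, both identities following from the workhorse relations above. The error then splits as $\int_0^h\bigl(\dot u(t+\tau)-\dot u(t)\bigr)\,\mathrm{d}\tau+h(I-R_h)\dot u(t)$; the first piece is $\le h\int_t^{t+h}\norm{u''(s)}\,\mathrm{d}s$ by Taylor's theorem with integral remainder, and the second is rewritten once more with the resolvent identities as $h^2$ times resolvents acting on $A_1\dot u(t)$ and on $A_2\dot u(t)=u''(t)-A_1\dot u(t)$, hence bounded by $Ch^2\bigl(\norm{A_1\dot u(t)}+\norm{u''(t)}\bigr)$ with $C$ depending only on the bounded quantities $hM[A_k]$; this already gives the $p=1$ bound (Douglas--Rachford, and Peaceman--Rachford seen as first order). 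For second-order Peaceman--Rachford I would push the expansion of $S_h-I-hR_hA$ one order further and exploit the symmetry of the factor arrangement (the discrete time-reversal symmetry $S_h=S_{-h}^{-1}$) to annihilate the $\Ordo(h^2)$ term, after which the surviving $\Ordo(h^3)$ remainder is again organised, via $A_2=A-A_1$, into resolvents acting on $A_1^{2}u'$, $A_1u''$ and $u'''$. Finally an elementary argument replaces each pointwise quantity $h\norm{A_1^{p-i}u^{(i+1)}(t_j)}$ by $\int_{t_{j-1}}^{t_j}\norm{A_1^{p-i}u^{(i+1)}}$ plus a higher-order term, and summing over $j$ against the stability bound produces the stated estimate, the constant $5$ leaving ample room for the accumulated numerical factors.

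I expect the main obstacle to be the second-order case: expanding the four-factor Peaceman--Rachford product to the required order, verifying the cancellation of the $\Ordo(h^2)$ defect, and -- crucially -- arranging the remainder so that \emph{every} unbounded operator appears as a power of $A_1$ applied to a derivative of $u$ (never $A_2$, which would demand regularity not granted by the hypotheses) while keeping all resolvent prefactors uniformly bounded under the step-size restriction. The accompanying book-keeping subtlety is that $S_h$ is only densely defined, so one must check throughout that the scheme, the telescoping identity and the stability estimate are applied only to vectors -- the exact solution values in $\DD(A)$ and the corresponding iterates -- for which every factor is meaningful; this is exactly what the ``regular enough'' hypothesis secures. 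The first-order Douglas--Rachford statement is then the $p=1$ reading of the same computation and is comparatively routine.
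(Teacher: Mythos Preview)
The paper does not prove this theorem itself; it simply defers to \cite[Theorem~2]{Hansen2013} for the Peaceman--Rachford case and \cite[Lemma~2]{Hansen2015} for Douglas--Rachford, remarking only that the latter extends to the infinite-dimensional setting and that the domain hypothesis can be relaxed to $\DD(A)\subset\DD(A_1)\cap\DD(A_2)$. Your outline reconstructs essentially the argument of those references: the telescoping global-error decomposition, stability via the resolvent bound \eqref{eq:resolvent_bound} and the Cayley estimate $\norm{(I+\tfrac h2A_k)(I-\tfrac h2A_k)^{-1}}\le\e^{3hM[A_k]/2}$ (together with the inner cancellation when iterating $S_h$ in the PR case), and the consistency analysis in which the resolvent identities and the substitution $A_2u=Au-A_1u=\dot u-A_1u$ are used to express every unbounded factor in the remainder as a power of $A_1$ acting on a time derivative of $u$. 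Your identification of the delicate points---the $\Ordo(h^2)$ cancellation in the PR expansion and the domain bookkeeping that forces the ``regular enough'' hypothesis---is also accurate, so the proposal is correct and matches the cited proofs.
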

We do not prove this theorem here but refer to the proof of \cite[Theorem~2]{Hansen2013} for the Peacman--Rachford scheme and the proof of \cite[Lemma~2]{Hansen2015} for the Douglas--Rachford scheme. The latter proof needs to be extended to the infinite dimensional setting which is straight-forward to do using the same techniques as is used in the former proof. Further, in \cite{Hansen2013} it is assumed that $\DD(A) = \DD(A_1) \cap \DD(A_2)$, however it is easy to see that the same proof also holds under the weaker assumption that $\DD(A) \subset \DD(A_1) \cap \DD(A_2)$.

%

As noted in the introduction the Douglas--Rachford and the Peaceman--Rachford schemes have favorable local error structures. A consequence is that, for linear two-operator splittings and under Assumptions \ref{as:dissipativity} and \ref{as:splitting}, regular initial data $\eta \in \DD(A^2)$ is enough for first-order convergence. The proof is based on the semigroup theory which we will not pursue in this paper, cf.\ \cite{Pazy}. For less regular initial data $\eta \in \DD(A_1)$ or $\eta \in \DD(A_2)$ we still have convergence, but possibly without order. However, in this case there may be no classical solution meaning that the latter convergence should be understood in the context of mild solutions, see \eqref{eq:mild_solution}. For a proof see \cite[Theorem~3]{Hansen2013}.

Thus, for domain decomposition operator splittings of the PDE \eqref{eq:rda} we get first-order convergence for the ADI schemes \eqref{eq:DR} and \eqref{eq:PR} without any other regularity assumptions on $u$ than that the initial data $\eta$ is in $\DD(A^2)$. This follows as a direct consequence of the discussion in the preceding paragraph and Lemmas \ref{lem:max_diss} and \ref{lem:splitting}.

\subsection{A first-order additive splitting scheme} \label{sec:sum_splitting}
Contrary to the ADI methods the additive splitting scheme \eqref{eq:sum_splitting} is stable for splittings of the evolution equation \eqref{eq:evolution_equation} with any number $q$ of linear maximal dissipative operators. We note that for domain decompositions it is most common to use more than two domains (resulting in more than two operators) as it gives more flexibility when constructing the decomposition, cf.\ Figures \ref{fig:domain_decomposition}a and \ref{fig:domain_decomposition}c. 
Let $M = \max\{M[A_1],\dots, M[A_q]\}$, we will prove the following.


\begin{theorem} \label{thm:sum_split_orders}
Consider the approximate solution $S_h^m\eta$ given by applying the additive splitting scheme \eqref{eq:sum_splitting} to the linear evolution equation \eqref{eq:evolution_equation}. Further, assume that Assumptions \ref{as:dissipativity} and \ref{as:splitting} are valid, that $hqM \leq 1/2$ and that the solution $u$ is regular enough with $u(t) \in \DD(A)$ for all times $t \in [0,T]$. Then the additive splitting scheme \eqref{eq:sum_splitting} is first-order convergent and the global error can be bounded as
\[ \norm{u(mh) - S_h^m\eta} \leq h \e^{2qMT} \Big(\norm{\ddot u}_{L^{1}(0,T;\HH)} + 2qT \sum_{k=1}^q \norm{A_k^2u}_{\CC([0,T];\HH)} \Big), \quad mh \leq T.\]
\end{theorem}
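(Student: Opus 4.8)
The proof will follow the classical telescoping (``Lady Windermere's fan'') argument, combining a stability estimate for the numerical flow $S_h$ with a one-step consistency bound and then summing the contributions. The decisive feature is that the stability constant must come out independent of the number of operators $q$. For stability, the resolvent bound \eqref{eq:resolvent_bound} gives $L[(I-hqA_k)\inv] \le 1/(1-hqM[A_k]) \le 1/(1-hqM)$ for each $k$; since $S_h = \frac1q\sum_{k=1}^q(I-hqA_k)\inv$ is a convex combination of these maps, the triangle inequality gives $L[S_h] \le 1/(1-hqM)$, and $hqM \le 1/2$ yields $1/(1-hqM) \le 1+2hqM \le \e^{2hqM}$. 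Hence $L[S_h^n] \le \e^{2hqMn} \le \e^{2qMT}$ whenever $nh \le T$. It is precisely the averaging over the resolvents that prevents any deterioration with $q$, in contrast to the ADI schemes of Theorem~\ref{thm:PR_DR_orders}.

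Next I would compute the one-step defect $d_j := u(t_j) - S_h u(t_{j-1})$, where $t_j = jh$. Iterating the resolvent identity $(I-z)\inv = I + z + z^2(I-z)\inv$ with $z = hqA_k$ and using that $(I-hqA_k)\inv$ commutes with $A_k$ gives, for sufficiently regular $v$,
\[ S_h v = v + h\sum_{k=1}^q A_k v + h^2 q\sum_{k=1}^q (I-hqA_k)\inv A_k^2 v. \]
On the other hand, Taylor's formula with integral remainder together with $\dot u = Au = \sum_{k=1}^q A_k u$ (valid under Assumptions~\ref{as:dissipativity} and \ref{as:splitting}, since \eqref{eq:evolution_equation} then has a classical solution for $\eta \in \DD(A)$) gives $u(t_j) = u(t_{j-1}) + h\sum_{k=1}^q A_k u(t_{j-1}) + \int_{t_{j-1}}^{t_j}(t_j-s)\ddot u(s)\,\mathrm{d}s$. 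Subtracting, the first-order terms cancel exactly, so
\[ d_j = \int_{t_{j-1}}^{t_j}(t_j-s)\ddot u(s)\,\mathrm{d}s - h^2 q\sum_{k=1}^q (I-hqA_k)\inv A_k^2 u(t_{j-1}), \]
and, bounding $t_j - s \le h$ on $[t_{j-1},t_j]$ and $L[(I-hqA_k)\inv] \le 1/(1-hqM) \le 2$, one obtains $\norm{d_j} \le h\int_{t_{j-1}}^{t_j}\norm{\ddot u(s)}\,\mathrm{d}s + 2h^2 q\sum_{k=1}^q\norm{A_k^2 u(t_{j-1})}$.

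To conclude, since $S_h$ is linear and $u(0) = \eta$, the global error telescopes as
\[ u(t_m) - S_h^m\eta = \sum_{j=1}^m S_h^{m-j}\big(u(t_j) - S_h u(t_{j-1})\big) = \sum_{j=1}^m S_h^{m-j}d_j, \]
so that $\norm{u(t_m) - S_h^m\eta} \le \e^{2qMT}\sum_{j=1}^m\norm{d_j}$ by the stability estimate. Summing the local bounds, using $\sum_{j=1}^m\int_{t_{j-1}}^{t_j}\norm{\ddot u(s)}\,\mathrm{d}s = \norm{\ddot u}_{L^1(0,mh;\HH)} \le \norm{\ddot u}_{L^1(0,T;\HH)}$ and $mh^2 = (mh)h \le Th$, gives $\sum_{j=1}^m\norm{d_j} \le h\norm{\ddot u}_{L^1(0,T;\HH)} + 2qTh\sum_{k=1}^q\norm{A_k^2 u}_{\CC([0,T];\HH)}$; multiplying by $\e^{2qMT}$ yields precisely the stated bound.

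The step I expect to demand the most care is the consistency identity for $S_h v$. It presupposes that ``regular enough'' is read so that $u(t) \in \DD(A_k^2)$ for every $k$ and every $t \in [0,T]$ — otherwise the right-hand side of the asserted estimate is infinite and there is nothing to prove — and that $\ddot u \in L^1(0,T;\HH)$; moreover one must verify, on the appropriate domains, that $A_k^2(I-hqA_k)\inv = (I-hqA_k)\inv A_k^2$, so that the defect may legitimately be expressed through $A_k^2 u(t_{j-1})$. Once these operator-theoretic points are settled, the remaining estimates are entirely routine.
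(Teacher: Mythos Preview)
Your proposal is correct and follows essentially the same route as the paper's proof: both use the telescoping expansion \eqref{eq:proof_sum_split_telescopic}, the same stability argument based on the resolvent bound and convex averaging, and the same second-order resolvent expansion (the paper writes it as $\alpha_k = I + a_k + a_k\alpha_k a_k$ with $a_k = hqA_k$, $\alpha_k = (I-a_k)^{-1}$, which is precisely your $(I-z)^{-1} = I + z + z^2(I-z)^{-1}$) to split the local error into the quadrature term $q_j$ and the splitting term $s_j$. The domain and commutativity caveat you flag at the end is exactly the one the paper handles via the standing hypothesis that $u$ is ``regular enough'' together with the identity $a_k\alpha_k = \alpha_k a_k$ on $\DD(A_k)$.
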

\begin{proof}
For $k = 1,\dots, q$ we introduce the notation
\[ a_k = hqA_k \quad \text{and} \quad \alpha_k = (I-a_k)\inv. \]
As Assumption~\ref{as:dissipativity} is valid and $hqM \leq 1/2$, the identities
\begin{equation}
I = \alpha_k - a_k \alpha_k
\label{eq:identity_expansion}
\end{equation}
hold on all of $\HH$. Furthermore, note that $a_k\alpha_k = \alpha_k a_k$ on $\DD(A_k)$. In the new notation the time stepping operator of the additive splitting scheme \eqref{eq:sum_splitting} can be written as
\[ S_h = \frac1q\sum_{k=1}^q{\alpha_k}. \]

Let $t_j = jh$ and expand the global error by the telescopic sum
\begin{equation}
\norm{u(mh) - S_h^m} = \big\lVert \sum_{j=1}^{m}{S_h^{m-j}\uj - S_h^{m-j+1}\ujm} \big\rVert \leq \sum_{j=1}^{m}{L[S_h^{m-j}] \norm{\uj - S_h\ujm}}.
\label{eq:proof_sum_split_telescopic} \end{equation}
Stability is a direct consequence of the resolvent bound \eqref{eq:resolvent_bound} as
\begin{equation} 
L[S_h] = L\big[\frac{1}{q}\sum_{k=1}^q \alpha_k\big] \leq \frac{1}{q}\sum_{k=1}^q\frac{1}{1-hqM[A_k]} \leq \frac{1}{1-hqM},
\label{eq:proof_sum_split_one_step_stability}
\end{equation} 
which gives
\begin{equation}
L[S_h^{m-j}] \leq L\big[\frac{1}{q}\sum_{k=1}^q \alpha_k\big]^{m-j} \leq \left(\frac{1}{1-hqM}\right)^{m-j} \leq \e^{2qMT},
\label{eq:proof_sum_split_stability}
\end{equation}
where we have used the inequality $1/(1-x) \leq \e^{2x}$ for all $x \in [0,1/2]$.

Since $\ujm \in \DD(A) \subset \bigcap_{k=1}^q \DD(A_k)$ we can use the commutativity $a_k\alpha_k = \alpha_k a_k$ and twice the identity $\alpha_k = I + a_k \alpha_k$ to expand the local error as
\begin{equation} 
\begin{aligned}
\uj - S_h\ujm &=  \uj - \frac1q \sum_{k=1}^q{(I + a_k + a_k \alpha_k a_k) \ujm} \\
	&= \uj - \ujm - hAu(t_{j-1}) - \frac1q \sum_{k=1}^q{a_k \alpha_k a_k\ujm}.
\end{aligned} \label{eq:proof_error_expansion}
\end{equation}
Then, for a sufficiently regular $u$ we can rewrite the error as a sum of a quadrature error $q_j$ and a splitting error $s_j$:
\begin{align}
&q_j = \uj - \ujm - hAu(t_{j-1}) = \uj - \ujm - h\dot u(t_{j-1}) = h\int_{t_{j-1}}^{t_j} \frac{t_{j}-\tau}{h} \ddot u(\tau) \dtau, \label{eq:proof_q_j} \\
&s_j = - \frac1q \sum_{k=1}^q{a_k \alpha_k a_k\ujm} = - h^2q\sum_{k=1}^q{\alpha_kA_k^2\ujm}. \label{eq:proof_s_j}
\end{align}
Finally, since the constants $L[\alpha_k]$ are bounded by 2 when $hqM \leq 1/2$, the local error can be bounded as
\[ \norm{\uj - S_h\ujm} \leq h \Big(\int_{t_{j-1}}^{t_j} \norm{\ddot u(\tau)} \dtau + 2hq\sum_{k=1}^q \norm{A_k^2 \ujm} \Big). \]
Combining with the stability \eqref{eq:proof_sum_split_stability} in the telescopic sum \eqref{eq:proof_sum_split_telescopic} the sought after global error bound is achieved.
\end{proof}

\begin{remark}
In \cite{Hansen2015} ADI splitting schemes are combined with convergent spatial discretizations like, e.g., classical piecewise linear finite elements. Simultaneous space-time convergence orders are proven when a linear maximal dissipative operator is split into two operators. We note that for splittings of the linear evolution equation \eqref{eq:evolution_equation} with arbitrary number of operators $q$ the same space-time convergence result holds also for the additive splitting scheme \eqref{eq:sum_splitting} with first-order convergence in time. For details we refer to the assumptions, theorems and lemmas in the aforementioned article. Only Assumption~2 and Theorem~3 of that article require modifications, and these are all minor.
\end{remark}

\subsection{First-order additive splitting schemes for semilinear evolution equations} \label{sec:sum_splitting_semilinear}
The first-order additive splitting scheme \eqref{eq:sum_splitting} can easily be extended to treat semilinear evolution equations of the type
\begin{equation}
\dot u = (A + F)u = \Big(\Big(\sum_{k=1}^q A_k\Big) + F\Big)u, \quad u(0) = \eta,
\label{eq:evolution_equation_semilinear}
\end{equation}
where $F$ is a nonlinear operator. 
In the abstract setting of maximal dissipative operators we will prove a semilinear extension of Theorem~\ref{thm:sum_split_orders} for the two schemes
\begin{subequations}
\begin{align}
&S_h = (I-hF)\inv\Big(\frac1q \sum_{k=1}^q (I-hqA_k)\inv\Big) \label{eq:sum_splitting_semilinear_dis} \quad \text{and} \\
&S_h = \Big(\frac1q \sum_{k=1}^q (I-hqA_k)\inv\Big)(I+hF) \label{eq:sum_splitting_semilinear_Lip}
\end{align}
\end{subequations}
under the corresponding assumptions:
\begin{assumption} \label{as:semilinear}
The operator $F:\DD(F) \subset \HH \rightarrow \HH$ satisfies one of the following two statements:
\begin{itemize}[noitemsep,topsep=0pt]
	\item[(a)] $F$ is maximal dissipative on $\HH$; or
	\item[(b)] $F$ is Lipschitz continuous on $\HH$.
\end{itemize}
Furthermore, the operator $A+F: \DD(A) \cap \DD(F) \subset \HH \rightarrow \HH$ is maximal dissipative on $\HH$.
\end{assumption}
That is, the scheme \eqref{eq:sum_splitting_semilinear_dis} is suited for stiff nonlinear vector fields $F$ whereas for nonstiff $F$ the scheme \eqref{eq:sum_splitting_semilinear_Lip} can be used instead for simpler computations.
\begin{example_} \label{ex:semilinear_parabolic}
Consider the parabolic model problem of Section~\ref{sec:domain_decomposition} with $\Omega \subset \R^d$, $d = 1,2,3$, and let $\rho = \sigma = 0$. Further, let $F: L^{2p}(\Omega) \subset L^2(\Omega) \rightarrow L^2(\Omega)$ be a nonlinear potential defined by $Fv = v - v^p$ for an odd integer $p \geq 3$. Then $F$ and $A+F: \DD(A) \cap \DD(F) \subset L^2(\Omega) \rightarrow L^2(\Omega)$ are maximal dissipative on $L^2(\Omega)$. Therefore, when splitting $A$ by a domain decomposition as in Section~\ref{sec:domain_decomposition}, Assumptions \ref{as:dissipativity}, \ref{as:splitting} and \ref{as:semilinear}a are all valid. Thus, as the action of the resolvent of $F$ is straight-forward to parallelize, efficient parallel computations can be carried out using the additive splitting scheme \eqref{eq:sum_splitting_semilinear_dis}. For details see \cite[Theorem~5.5]{Barbu} and \cite[Section~6]{Hansen2013}.
\end{example_}
\begin{example_} \label{ex:Caginalp}
We exemplify also with a semilinear application with a linear term which is not on the form \eqref{eq:rda}. That is, consider on $\Omega \subset \R^d$, $d = 1,2,3$, the phase field model
\[ \dot u = Au + Fu = \bp \Delta & -c\Delta \\ 0 & \Delta \ep \bp u_1 \\ u_2 \ep + \bp 0 \\ (1-c)u_2 - u_2^3 + u_1 \ep, \]
which is commonly encountered when modeling solidification processes. Here $c$ is a positive constant, $u$ is a vector-valued function $u = (u_1, u_2)\trans$ and the boundary of $\Omega$ fulfills the regularity properties assumed in Section~\ref{sec:domain_decomposition}. Equip the equation with a suitable initial condition and either homogeneous Dirichlet or Neumann conditions and define a domain decomposition operator splitting by letting $A_k$ be $A$ but with all occurrences of $\Delta$ replaced by $\nabla\cdot\chi_k\nabla$. Then, a similar analysis as in Section~\ref{sec:analytic_setting} can be performed to show that Assumptions \ref{as:dissipativity}, \ref{as:splitting} and \ref{as:semilinear}a are valid with $\HH = (L^2(\Omega))^2$. This defines a splitting fit for efficient parallel computations. See also the references in the preceding example.
\end{example_}

Since $A+F$ is nonlinear the existence of a classical solution to the evolution equation \eqref{eq:evolution_equation_semilinear} can not be guaranteed. However, under Assumption~\ref{as:semilinear} there is a unique mild solution for every $\eta$ in the closure of $\DD(A)\cap\DD(F)$. This mild solution is defined by
\begin{equation}
u(t) = \lim_{m\rightarrow \infty} \left(I - \frac tm (A+F)\right)^{-m} \eta,
\label{eq:mild_solution}
\end{equation}
for $t > 0$. Nonlinear maximal dissipative operators and associated evolution equations are surveyed in \cite[Sections 3.1 and 4.1]{Barbu}.

\begin{theorem} \label{thm:sum_split_semilinear_orders}
Consider the approximate solution $S_h^m\eta$ given by applying one of the additive splitting schemes \eqref{eq:sum_splitting_semilinear_dis} or \eqref{eq:sum_splitting_semilinear_Lip} to the semilinear evolution equation \eqref{eq:evolution_equation_semilinear}. Further, assume that Assumptions \ref{as:dissipativity} and \ref{as:splitting} are valid, that $hqM \leq 1/2$ and that the solution $u$ is regular enough with $u(t) \in \DD(A)$ for all times $t \in [0,T]$. Then, if Assumption~\ref{as:semilinear}a is valid and $hM[F] \leq 1/2$, the scheme \eqref{eq:sum_splitting_semilinear_dis} is first-order convergent with global error bounded as
\[ \norm{u(mh) - S_h^m\eta} \leq 2h \e^{2T(qM + M[F])} \Big(\sum_{i=0}^1 \big\lVert A^{1-i}u^{(i+1)}\big\rVert_{L^1(0,T;\HH)} + 2qT \sum_{k=1}^q \norm{A_k^2u}_{\CC([0,T];\HH)} \Big), \]
and, if instead Assumption~\ref{as:semilinear}b is valid, the scheme \eqref{eq:sum_splitting_semilinear_Lip} is first-order convergent with global error bounded as
\[ \norm{u(mh) - S_h^m\eta} \leq h \e^{T(2qM + L[F])} \Big(\norm{\ddot u}_{L^1(0,T;\HH)} + 2T\sum_{k=1}^q \norm{A_kFu + qA_k^2u}_{\CC([0,T];\HH)}\Big), \]
where $mh \leq T$.
\end{theorem}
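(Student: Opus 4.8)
The plan is to mirror the structure of the proof of Theorem~\ref{thm:sum_split_orders}: expand the global error by a telescopic sum, bound the amplification constants $L[S_h^{m-j}]$ using the resolvent bound \eqref{eq:resolvent_bound}, and then expand the one-step local error $\norm{u(t_j) - S_h u(t_{j-1})}$ into a quadrature piece and a splitting piece. The new ingredient is the extra factor $(I-hF)\inv$ in \eqref{eq:sum_splitting_semilinear_dis} (resp.\ $(I+hF)$ in \eqref{eq:sum_splitting_semilinear_Lip}) and the fact that $u$ now solves the semilinear equation \eqref{eq:evolution_equation_semilinear}, so $\dot u(t_{j-1}) = Au(t_{j-1}) + Fu(t_{j-1})$. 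First I would treat case (a): since $F$ is maximal dissipative and $hM[F]\le 1/2$, the resolvent $(I-hF)\inv$ is well-defined with $L[(I-hF)\inv]\le 1/(1-hM[F])\le \e^{2hM[F]}$, so combined with \eqref{eq:proof_sum_split_stability} one gets $L[S_h^{m-j}] \le \e^{2T(qM+M[F])}$, accounting for the exponential prefactor. For case (b), $L[I+hF] \le 1 + hL[F] \le \e^{hL[F]}$ gives $L[S_h^{m-j}] \le \e^{T(2qM+L[F])}$.

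For the local error in case (a), write $S_h = (I-hF)\inv T_h$ with $T_h = \frac1q\sum_k \alpha_k$ the linear additive flow from the previous proof. Then $u(t_j) - S_h u(t_{j-1}) = (I-hF)\inv\big[(I-hF)u(t_j) - T_h u(t_{j-1})\big]$, and $L[(I-hF)\inv]\le 2$. Inside the bracket insert and subtract $u(t_{j-1})$ and $h\dot u(t_{j-1}) = hAu(t_{j-1}) + hFu(t_{j-1})$, so that the $F$-terms collect into $-h\big(Fu(t_j)-Fu(t_{j-1})\big)$, which I would rewrite using the fundamental theorem of calculus as $-h\int_{t_{j-1}}^{t_j} \frac{d}{d\tau}Fu(\tau)\,d\tau$ and then, since $\frac{d}{d\tau}Fu = F'(u)\dot u$ and $\dot u = (A+F)u$, bound by $h\int_{t_{j-1}}^{t_j}\norm{(A+F)u^{(1)}}\,d\tau$ — equivalently, and more cleanly, one observes $Fu(t_j)-Fu(t_{j-1}) = \int_{t_{j-1}}^{t_j}\partial_\tau(Fu(\tau))\,d\tau$ and $\partial_\tau Fu = \partial_\tau(\dot u - Au) = \ddot u - Au^{(1)}$, so this term is exactly $h\int_{t_{j-1}}^{t_j}\big(\ddot u(\tau) - Au^{(1)}(\tau)\big)\,d\tau$; adding the quadrature term $q_j = h\int_{t_{j-1}}^{t_j}\frac{t_j-\tau}{h}\ddot u(\tau)\,d\tau$ from \eqref{eq:proof_q_j} and the splitting term $s_j = -h^2q\sum_k \alpha_k A_k^2 u(t_{j-1})$ from \eqref{eq:proof_s_j}, using $L[\alpha_k]\le 2$, yields the local error bound $\norm{u(t_j)-S_h u(t_{j-1})} \le 2h\big(\int_{t_{j-1}}^{t_j}(\norm{\ddot u(\tau)} + \norm{Au^{(1)}(\tau)})\,d\tau + 2hq\sum_k\norm{A_k^2 u(t_{j-1})}\big)$; summing over $j\le m$, absorbing $\sum_j h = mh\le T$ into the coefficient of the $A_k^2 u$ term, and multiplying by the stability constant gives the stated bound with the $\sum_{i=0}^1\norm{A^{1-i}u^{(i+1)}}_{L^1}$ expression. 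For case (b) the computation is the same except the nonlinearity sits on the outside as $(I+hF)$: $u(t_j) - T_h(I+hF)u(t_{j-1})$; here one expands $T_h(I+hF)u(t_{j-1}) = \frac1q\sum_k(I + a_k + a_k\alpha_k a_k)(I+hF)u(t_{j-1})$, the linear $I + a_k$ part combined with $hF$ reproduces $u(t_{j-1}) + h\dot u(t_{j-1})$, the quadrature term $q_j$ is as before, and the remaining terms are $-\frac1q\sum_k a_k\alpha_k a_k(I+hF)u(t_{j-1})$; expanding $a_k a_k(I+hF) = h^2q^2 A_k^2 + h^3 q^2 A_k^2 F$ and using $\norm{A_k^2 F u + q A_k^2 u}$ — wait, more precisely $a_k a_k(I+hF)u = h^2q^2(A_k^2 u + h A_k^2 F u)$ and since $h A_k^2 F u$ is higher order one keeps $A_k^2 u$, but to match the stated bound one writes $\dot u - Au = Fu$ is not needed; rather the combination $A_k F u + q A_k^2 u$ arises from regrouping $hqA_k(\alpha_k a_k)(I+hF) = hqA_k\alpha_k(hqA_k + h^2qA_k F)u$ and pulling out one factor $h^2 q$, leaving $qA_k^2 u + A_k F u$ inside modulo the resolvent $\alpha_k$; with $L[\alpha_k]\le 2$ this gives the local bound $\norm{u(t_j)-S_h u(t_{j-1})}\le h\big(\int_{t_{j-1}}^{t_j}\norm{\ddot u(\tau)}\,d\tau + 2h\sum_k\norm{A_kFu(t_{j-1}) + qA_k^2 u(t_{j-1})}\big)$, and summing as before produces the second stated estimate.

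The main obstacle I anticipate is the rewriting of the nonlinear difference $Fu(t_j) - Fu(t_{j-1})$ (case a) and the mixed term $A_k F u$ (case b) in terms of quantities that appear in the final bound without invoking differentiability of $F$ — the paper only assumes $F$ maximal dissipative or Lipschitz, not $\CC^1$. The clean route, which avoids $F'$ entirely, is to use that along the exact solution $Fu(\tau) = \dot u(\tau) - Au(\tau)$ (valid since $u(\tau)\in\DD(A)$ and $u$ solves \eqref{eq:evolution_equation_semilinear}), so $Fu$ is as smooth as $\dot u$ and $Au$ allow, and $\partial_\tau(Fu(\tau)) = \ddot u(\tau) - A u^{(1)}(\tau)$ as an $\HH$-valued identity; this is where the $\norm{A^{1-i}u^{(i+1)}}$ norms come from and is exactly the "$u$ regular enough" hypothesis at work. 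A secondary technical point is checking that $S_h$ maps into $\DD(A)\cap\DD(F)$ (needed so the telescoping is legitimate and so $T_h u(t_{j-1})$ lies in $\DD(F)$ in case a), which follows from the range/domain properties in Definition~\ref{def:maximal_dissipative} and Assumption~\ref{as:semilinear} together with $u(t_{j-1})\in\DD(A)$. Everything else is a routine repetition of the estimates in the proof of Theorem~\ref{thm:sum_split_orders}.
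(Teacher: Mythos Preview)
Your proposal is correct and follows essentially the same route as the paper: telescoping, stability via the resolvent bound, and the same quadrature/splitting decomposition of the local error, with the nonlinear contribution handled through the identity $Fu=\dot u - Au$ along the exact solution. The only cosmetic difference is that in case~(a) the paper shifts the quadrature anchor to $t_j$ directly (writing $q_j = u(t_j)-u(t_{j-1})-h(Fu(t_j)+Au(t_j))$ and moving the compensating term $hA(u(t_j)-u(t_{j-1}))=h\!\int_{t_{j-1}}^{t_j}A\dot u\,d\tau$ into $s_j$, invoking closedness of $A$), whereas you reach the identical expression via $Fu(t_j)-Fu(t_{j-1})=\int(\ddot u - A\dot u)\,d\tau$; in case~(b) your computation, once the arithmetic settles, matches the paper's added term $-h^2\sum_k\alpha_k A_kFu(t_{j-1})$ exactly.
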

\begin{proof}
We introduce the notation
\[ f = hF \quad \text{and} \quad \vphi = (I-f)\inv, \]
and note that the identity \eqref{eq:identity_expansion} applies also with $a_k$ and $\alpha_k$ replaced with $f$ and $\vphi$. In this notation the schemes \eqref{eq:sum_splitting_semilinear_dis} and \eqref{eq:sum_splitting_semilinear_Lip} can be written as
\[ S_h = \vphi\Big(\sumsplit\Big), \quad \text{respectively} \quad S_h = \Big(\sumsplit\Big)(I+f). \]

We discuss the necessary modifications to the proof of Theorem~\ref{thm:sum_split_orders}. As the resolvent bound \eqref{eq:resolvent_bound} holds for maximal dissipative $F$, the stability of the scheme \eqref{eq:sum_splitting_semilinear_dis} under Assumption~\ref{as:semilinear}a follows as in the linear case but with $L[S_h^{m-j}] \leq\e^{2T(qM + M[F])}$. For the consistency we bound the local error as
\[ \norm{\uj - S_h\ujm} \leq L[\vphi] \norm{(I-f)\uj - (I-f)S_h\ujm}, \]
where $L[\vphi] \leq 2$. Expanding $(I-f)\uj - (I-f)S_h\ujm$ as in \eqref{eq:proof_error_expansion} and then subtracting $hA(\uj-\ujm)$ from the quadrature error and adding it to the splitting error gives
\[ \begin{aligned}
&q_j = \uj - \ujm - h (F\uj + A\uj) = -h\int_{t_{j-1}}^{t_j} \frac{\tau-t_{j-1}}{h} \ddot u(\tau) \dtau,\\
&s_j = hA(\uj-\ujm) - h^2q\sum_{k=1}^q{\alpha_k A_k^2\ujm},
\end{aligned} \]
which holds for a sufficiently regular $u$. Further, note that 
\[ A(\uj-\ujm) = A \int_{t_{j-1}}^{t_j} \dot u(\tau) \dtau = \int_{t_{j-1}}^{t_j} A \dot u(\tau) \dtau, \]
where the last equality holds since the operator $A$ is closed as a consequence of being linear and maximal dissipative. 

Now, for Lipschitz continuous $F$ we have $L[I+f] \leq 1 + hL[F]$ implying $L[I+f]^{m-j} \leq \e^{TL[F]}$ which gives the stability for the scheme \eqref{eq:sum_splitting_semilinear_Lip}. Using the regularity of $u$ and expanding the local error as in \eqref{eq:proof_error_expansion} gives
\[ \uj - S_h\ujm = \uj - \frac1q\sum_{k=1}^q{(I + a_k + a_k \alpha_k a_k)\ujm} - \frac1q\sum_{k=1}^q{\left(I + a_k\alpha_k \right)f\ujm}. \]
Thus, we get the same quadrature error $q_j$ as in \eqref{eq:proof_q_j} and the splitting error $s_j$ of \eqref{eq:proof_s_j} is modified by adding the term $-h^2\sum_{k=1}^q\alpha_kA_kF \ujm$. The rest follows as in the proof of Theorem~\ref{thm:sum_split_orders}.
\end{proof}

\begin{remark}
The concept of dissipativity can be generalized to spaces $\HH$ which are merely Banach spaces, see e.g.\ \cite[Section~3.1]{Barbu}. With this generalization the resolvent of a maximal dissipative operator still exists and fulfills the bound \eqref{eq:resolvent_bound}. Thus, inequality \eqref{eq:proof_sum_split_one_step_stability} still holds, the additive splitting schemes \eqref{eq:sum_splitting_semilinear_dis} and \eqref{eq:sum_splitting_semilinear_Lip} are still stable, and the global error bounds of Theorems \ref{thm:sum_split_orders} and \ref{thm:sum_split_semilinear_orders} still hold. However, Theorem~\ref{thm:PR_DR_orders} can not be extended to this Banach setting since the stability of the Douglas--Rachford and Peaceman--Rachford schemes require more structure, cf.\ \cite[page~1906]{Hansen2013}.
\end{remark}

Even when the regularity required by Theorems \ref{thm:sum_split_orders} or \ref{thm:sum_split_semilinear_orders} is not present we may still have convergence without order to the mild solution \eqref{eq:mild_solution}. 
\begin{theorem} \label{thm:sum_split_semilinear_wo_orders}
Consider the approximate solution $S_h^m\eta$ given by applying one of the additive splitting schemes \eqref{eq:sum_splitting_semilinear_dis} or \eqref{eq:sum_splitting_semilinear_Lip} to the semilinear evolution equation \eqref{eq:evolution_equation_semilinear}. Further, assume that Assumptions \ref{as:dissipativity} and \ref{as:splitting} are valid and that $\DD(A)\cap\DD(F)$ is dense in $\HH$. Then, under Assumption~\ref{as:semilinear}a the scheme \eqref{eq:sum_splitting_semilinear_dis} is convergent, i.e.\
\[ \lim_{m\rightarrow\infty} S^m_{t/m} \eta = u(t), \]
for every $\eta \in \HH$ and $t \geq 0$. Furthermore, if $\DD(F) = \HH$ the same holds for the scheme \eqref{eq:sum_splitting_semilinear_Lip} under Assumption~\ref{as:semilinear}b.
\end{theorem}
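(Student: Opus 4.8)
The plan is to deduce convergence without order from a Lax-type equivalence argument: the schemes are stable (this was essentially established already in the proofs of Theorems~\ref{thm:sum_split_orders} and~\ref{thm:sum_split_semilinear_orders}, where one-step Lipschitz bounds like $L[S_h] \leq 1/(1-hqM)$ and $L[S_h] \leq \e^{h L[F]}/(1-hqM)$ were derived, uniformly in $h$ up to exponential-in-$T$ factors), and they are consistent in the appropriate weak sense. The nonlinear Chernoff-type / Brezis--Pazy theory for maximal dissipative generators (see \cite[Sections~3.1 and~4.1]{Barbu}) then yields convergence to the mild solution \eqref{eq:mild_solution} for \emph{every} initial datum in the closure of $\DD(A)\cap\DD(F)$, which under the density hypothesis is all of $\HH$. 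So the heart of the matter is to verify the consistency condition of that abstract convergence theorem.

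First I would record stability: from the resolvent bound \eqref{eq:resolvent_bound} and \eqref{eq:proof_sum_split_one_step_stability}, for $hqM \leq 1/2$ one has $L\big[\sumsplit\big] \leq 1/(1-hqM) \leq \e^{2hqM}$; composing with $L[\vphi] \leq 2$ — more precisely, using that $(I-f)\inv$ is nonexpansive up to the shift, $L[\vphi]\leq 1/(1-hM[F])$ when $hM[F]<1$ — gives $L[S_h] \leq \e^{ch}$ for scheme \eqref{eq:sum_splitting_semilinear_dis} with $c = c(qM, M[F])$, and similarly $L[I+f] \leq 1 + hL[F] \leq \e^{hL[F]}$ gives the bound for \eqref{eq:sum_splitting_semilinear_Lip} under Assumption~\ref{as:semilinear}b with $\DD(F)=\HH$. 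Hence $L[S_{t/m}^m] \leq \e^{ct}$ uniformly in $m$, which is the stability hypothesis.

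Next, consistency. For $v \in \DD(A)\cap\DD(F)$ I would expand $S_h v$ exactly as in the proofs above, using $\alpha_k = I + a_k\alpha_k$ and $a_k\alpha_k = \alpha_k a_k$ on $\DD(A_k)$ and the analogous identity for $\vphi$. For scheme \eqref{eq:sum_splitting_semilinear_dis} this gives
\[
\frac{S_h v - v}{h} = \frac1q\sum_{k=1}^q \alpha_k A_k \vphi v + F\vphi v \;=\; (A+F)v + \big(\text{terms} \to 0\big),
\]
since $\vphi v \to v$, $\alpha_k \to I$ strongly on $\HH$ as $h\to 0$ (a consequence of maximal dissipativity and density of domains, cf.\ \cite[Section~3.1]{Barbu}), and $A_k\vphi v \to A_k v$ because each $A_k$ is closed and $\alpha_k$ commutes suitably. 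Thus $(S_h v - v)/h \to (A+F)v$ for $v$ in a core, and $\{(I - t(A+F)/m)\inv : v \mapsto \cdots\}$-type comparisons then pin down the limit as the mild solution. The analogous expansion for \eqref{eq:sum_splitting_semilinear_Lip}, with $I+f$ in place of $\vphi$, gives $(S_h v - v)/h \to (A+F)v$ with $F$ merely Lipschitz, using continuity of $F$ and $a_k\alpha_k \to 0$ strongly.

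I expect the main obstacle to be the rigorous passage from discrete consistency-on-a-core plus uniform stability to convergence for arbitrary $\eta \in \HH$: this is exactly where one must invoke the correct nonlinear semigroup convergence theorem (a Trotter--Kato / Brezis--Pazy / Chernoff statement for $m$-dissipative generators) and check its precise hypotheses — in particular that the schemes $S_h$ are themselves Lipschitz with the right shift constant so that the product $S_{t/m}^m$ converges, and that one genuinely identifies the generator of the limit semigroup with $A+F$ rather than some proper restriction. Once the abstract theorem is correctly cited, the verification reduces to the stability estimate and the strong-convergence facts $\alpha_k \to I$, $\vphi \to I$, $A_k\alpha_k v \to A_k v$ assembled above, all of which are routine given Assumptions \ref{as:dissipativity}, \ref{as:splitting}, \ref{as:semilinear} and the density of $\DD(A)\cap\DD(F)$.
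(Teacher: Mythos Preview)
Your overall strategy---stability from the resolvent bounds plus consistency on a core, then invoke a nonlinear Chernoff/Brezis--Pazy theorem---is exactly what the paper does (it cites \cite[Corollary~4.3]{Brezis}). However, your expansion for scheme \eqref{eq:sum_splitting_semilinear_dis} is miscomputed, and this leads you to misidentify the actual technical difficulty.

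With $S_h = \vphi\circ\big(\tfrac1q\sum_k\alpha_k\big)$ (the resolvent of $F$ is applied \emph{last}), set $v_h = \tfrac1q\sum_k\alpha_k v$ for $v\in\DD(A)\cap\DD(F)$. Then
\[
\frac{S_hv-v}{h} \;=\; \frac{\vphi v_h - v_h}{h} + \frac{v_h-v}{h} \;=\; F\vphi v_h \;+\; \sum_{k=1}^q A_k\alpha_k v,
\]
not $\tfrac1q\sum_k\alpha_kA_k\vphi v + F\vphi v$ as you wrote. The term $\sum_k A_k\alpha_k v$ is easy: it equals $\sum_k\alpha_k A_k v \to Av$ by strong convergence of $\alpha_k$. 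The delicate term is $F\vphi v_h$: here $v_h$ depends on $h$, $F$ is unbounded and nonlinear, and there is no reason a priori that $\vphi v_h\in\DD(A_k)$, so your ``$A_k\vphi v\to A_kv$ since $A_k$ is closed'' argument is both aimed at the wrong term and would not work anyway (closedness needs $A_k\vphi v$ to converge \emph{first}). What is actually required is the statement that $F\vphi v_h\to Fv$ whenever $(v_h-v)/h$ converges in $\HH$; this is a nontrivial lemma about resolvents of maximal dissipative operators, and the paper invokes \cite[Lemma~3]{Lions} precisely here. Once that is in place, together with $(v_h-v)/h\to Av$ computed above, consistency follows and \cite[Corollary~4.3]{Brezis} finishes the proof. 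Your treatment of scheme \eqref{eq:sum_splitting_semilinear_Lip} is essentially fine and matches the paper.
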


\begin{proof}
First, let $F$ be maximal dissipative and note that it is densely defined since $\DD(A)\cap\DD(F) \subset \DD(F)$ is dense in $\HH$. 
In this setting \cite[Lemma~3]{Lions} gives, for each $v \in \DD(F)$, that
\[\lim_{h\rightarrow 0} F\vphi v_h = Fv\]
for each family $\{v_h\}_h \subset \HH$ such that there is a $z \in \HH$ for which $(v_h - v)/h$ tends to $z$ when $h$ tends to zero. Of course the same type of limit also holds for the operators $A_1, \dots, A_q$. Now, for $v \in \DD(A) \subset \bigcap_{k=1}^q \DD(A_k)$ let $v_h = 1/q\cdot\sum_{k=1}^q \alpha_kv$, we have
\[ \frac1h (v_h - v) = \frac1h \Big( \frac1q\sum_{k=1}^q\left(I+a_k\alpha_k\right)v - v \Big) = \sum_{k=1}^q A_k \alpha_k v \rightarrow \sum_{k=1}^q A_k v = Av, \quad \text{as } h\rightarrow 0. \]
Then, the scheme \eqref{eq:sum_splitting_semilinear_dis} is consistent, i.e.,
\[ \frac1h (S_h-I)v = \frac1h (\vphi - I) v_h + \frac1h (v_h - v) = F\vphi v_h + \sum_{k=1}^q A_k \alpha_k v \rightarrow (F+A)v, \quad \text{as } h\rightarrow 0, \]
for every $v \in \DD(A)\cap\DD(F)$. Therefore, since $S_h$ is stable in the sense $L[S_h] \leq 1 + \Ordo(h)$ the scheme \eqref{eq:sum_splitting_semilinear_dis} is convergent according to \cite[Corollary~4.3]{Brezis}. 

Now, instead let Assumption~\ref{as:semilinear}b be valid and let $\DD(F) = \HH$. Let $S_h$ be given by the scheme \eqref{eq:sum_splitting_semilinear_Lip}, then $S_h$ is defined on all of $\HH$. Furthermore, the scheme is consistent as
\[ \frac1h (S_h-I)v = \frac1h \Big( \frac1q\sum_{k=1}^q\left(I+a_k\alpha_k + \alpha_kf\right)v - v \Big) = \sum_{k=1}^q \Big(A_k \alpha_k + \frac1q\alpha_k F \Big) v  \rightarrow (A+F)v,  \quad \text{as } h\rightarrow 0, \]
for every $v \in \DD(A)$. Here we have used that $\alpha_k w$ tends to $w$ for $w$ in $\HH$ and maximal dissipative $A_k$, cf.\ \cite[Proposition~11.3]{Deimling1985}. Then, combining with the stability given by inequality \eqref{eq:proof_sum_split_one_step_stability} and the Lipschitz continuity of $F$ we again get the sought after convergence result from \cite[Corollary~4.3]{Brezis}.
\end{proof}

Note that $\DD(A)$ is dense in $\HH$ as a direct consequence of Assumption~\ref{as:dissipativity}. Thus, in the linear ($F=0$) maximal dissipative setting, the additive splitting scheme \eqref{eq:sum_splitting} always converges. That is, for domain decomposition operator splittings of the PDE \eqref{eq:rda} this scheme is always convergent as a direct consequence of Lemmas \ref{lem:max_diss} and \ref{lem:splitting}, without any further assumptions.

\subsection{A second-order additive fractional step Crank--Nicolson scheme} \label{sec:fractional_step_CN}
As a second-order alternative to the additive splitting scheme \eqref{eq:sum_splitting} we consider a fractional step Crank--Nicolson method. Just as for the first-order scheme \eqref{eq:sum_splitting} any number of linear operators are allowed. Furthermore, there is some additive structure that can be used for implementation on parallel computing systems. For brevity, we will define the second-order scheme in the following notation:
\[ a_k = \frac{h}{2}A_k \quad \text{and} \quad \alpha_k = (I-a_k)\inv, \quad \text{for } k = 1,\dots,q.\]
Note that, under Assumption~\ref{as:dissipativity} and with $hM[A_k] < 2$, the identity \eqref{eq:identity_expansion} holds, which means that the operators $a_k\alpha_k$ are bounded. Furthermore, since the domains $\DD(A_k)$ are dense in $\HH$ (as per Assumption~\ref{as:dissipativity}), the operators $\alpha_k a_k$ can be extended to bounded operators on all of $\HH$ using the relationship $a_k\alpha_k = \alpha_k a_k$ which holds on $\DD(A_k)$ (and after the extension also on $\HH$). Thus, the fractional step Crank--Nicolson scheme,
\begin{equation}
S_h\; =\; \frac12 \alpha_q(I+a_q) \alpha_{q-1}(I+a_{q-1}) \cdots \alpha_1(I+a_1)\; +\; \frac12 \alpha_1(I+a_1) \alpha_{2}(I+a_{2}) \cdots \alpha_q(I+a_q),
\label{eq:fractional_step_CN}
\end{equation}
is well-defined. The second-order convergence is given as follows.
\begin{theorem} \label{thm:fractional_step_CN_orders}
Consider the approximate solution $S_h^m\eta$ given by applying the scheme \eqref{eq:fractional_step_CN} to the linear evolution equation \eqref{eq:evolution_equation}. Further, assume that Assumptions \ref{as:dissipativity} and \ref{as:splitting} are valid, that $hM \leq 1$ and that the solution $u$ is regular enough with $u(t) \in \DD(A)$ for all times $t \in [0,T]$. Then the fractional step Crank--Nicolson scheme \eqref{eq:fractional_step_CN} is second-order convergent and the global error can be bounded as
\[ \begin{aligned}
\norm{u(mh) - S_h^m\eta} &\leq Ch^2 \Big(\lVert u^{(3)}\rVert_{L^1(0,T;\HH)} + \sum_{k=1}^q \sum_{\ell=1}^q \sum_{i=1}^2 \norm{A_k^i A_{\ell}^{3-i} u}_{\CC([0,T];\HH)} \\
	&+ \sum_{k=1}^q\; \sum_{\ell=k+1}^q\; \sum_{r=\ell+1}^q \big( \norm{A_k A_{\ell} A_{r} u}_{\CC([0,T];\HH)} + \norm{A_{r} A_{\ell} A_{k} u}_{\CC([0,T];\HH)} \big) \Big), 
\end{aligned} \]
where $mh \leq T$ and $C$ is a constant that only depends on $q$, $M[A_1], \dots, M[A_q]$ and $T$.
\end{theorem}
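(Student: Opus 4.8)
The plan is to mimic the structure of the proof of Theorem~\ref{thm:sum_split_orders}: expand the global error by the telescopic sum \eqref{eq:proof_sum_split_telescopic}, establish stability of $S_h$ (i.e.\ $L[S_h^{m-j}] \leq C$ for $mh \leq T$), and then bound the local error $\norm{\uj - S_h\ujm}$ by $Ch^3$ so that summing over the $m \leq T/h$ steps yields the $Ch^2$ global bound. Stability is the easy part: since $hM[A_k] \leq hM \leq 1 < 2$, the identity \eqref{eq:identity_expansion} gives that each factor $\alpha_k(I+a_k) = I + a_k + a_k\alpha_k a_k$ is bounded on $\HH$, and using the resolvent bound \eqref{eq:resolvent_bound} together with the elementary estimate $\norm{\alpha_k(I+a_k)v} \leq (1 + Ch)\norm{v}$ (which follows from dissipativity of $A_k$: $\norm{(I+a_k)v}^2 = \norm{v}^2 + (h^2/4)\norm{A_kv}^2 + hM[A_k]\norm{v}^2$-type manipulations, or more simply from $\norm{\alpha_k(I+a_k)} \leq \norm{\alpha_k}\,\norm{I+a_k}$ is not quite enough, so one argues via the Crank--Nicolson contraction-up-to-shift estimate) one gets $L[S_h] \leq 1 + Ch$ and hence $L[S_h^{m-j}] \leq \e^{CT}$, with $C$ depending only on $q$, the $M[A_k]$ and $T$.

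For the local error I would Taylor-expand each of the two $q$-fold products around $\ujm \in \DD(A)$, using repeatedly the identity $\alpha_k(I+a_k) = I + a_k + a_k\alpha_k a_k$ and the commutativity $a_k\alpha_k = \alpha_k a_k$ on $\DD(A_k)$. Writing $P_+ = \alpha_q(I+a_q)\cdots\alpha_1(I+a_1)$ and $P_- = \alpha_1(I+a_1)\cdots\alpha_q(I+a_q)$, multiplying out the products and collecting terms by total order in $h$ gives
\[
P_\pm \ujm = \ujm + \sum_{k=1}^q a_k \ujm + \tfrac12\Big(\sum_{k} a_k\Big)^2\ujm \mp \tfrac12\big[\text{commutator terms of order }h^2\big] + R_\pm,
\]
where $\sum_k a_k = (h/2)A$ on $\DD(A)$, so the zeroth, first and symmetric second-order terms assemble into $\ujm + (h/2)A\ujm + (h^2/8)A^2\ujm$. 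The crucial cancellation is that the antisymmetric order-$h^2$ commutator contributions — which for a single product would be $O(h^2)$ and only give first order — have opposite signs in $P_+$ and $P_-$, so in $S_h = \tfrac12(P_+ + P_-)$ they cancel exactly. Hence $S_h\ujm = \ujm + hA\ujm + (h^2/2)A^2\ujm + (\text{remainder of order }h^3)$, matching the Taylor expansion $\uj = \ujm + h\dot u(t_{j-1}) + (h^2/2)\ddot u(t_{j-1}) + h\int_{t_{j-1}}^{t_j}\tfrac{(t_j-\tau)^2}{2h}u^{(3)}(\tau)\dtau$ up to $O(h^2)$ (recall $\dot u = Au$, $\ddot u = A^2 u$ on $\DD(A^2)$, which the regularity hypothesis $u(t) \in \DD(A)$ together with the smoothness of $u$ supplies), leaving a local error that is a quadrature term of size $h^3\lVert u^{(3)}\rVert_{L^1(t_{j-1},t_j)}$ plus a splitting remainder $R_\pm$.

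The main obstacle will be controlling the remainder terms $R_\pm$: these are the collection of all monomials in the expansion of $P_\pm$ of total degree $\geq 3$ in the $a_k$, each of which has the schematic form $h^3$ times a bounded operator (a product of $\alpha_k$'s, which are uniformly bounded by $2$ when $hM \leq 1$) composed with a product of three operators $A_{k_1}A_{k_2}A_{k_3}$ applied to $\ujm$. One must carefully enumerate which triple products $A_k^i A_\ell^{3-i}$ (the "same two indices" terms), $A_kA_\ell A_r$ and $A_rA_\ell A_k$ (the "three distinct ordered" terms) actually survive — the structure of the bound in the theorem statement tells us exactly which ones appear — and show the coefficients are $h^3$ times constants depending only on $q$. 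Here one uses that $\ujm \in \DD(A)$ makes $A_k\ujm$ well-defined (Lemma~\ref{lem:splitting}) and that $u(t) \in \DD(A)$ for all $t$ gives the needed regularity for $A_{k_1}A_{k_2}A_{k_3}u$ to lie in $\CC([0,T];\HH)$; replacing $A_{k_1}A_{k_2}A_{k_3}\ujm$ by $A_{k_1}A_{k_2}A_{k_3}u$ evaluated in the sup-norm absorbs the $j$-dependence. Finally, inserting the stability bound $L[S_h^{m-j}] \leq \e^{CT}$ and the local error bound into \eqref{eq:proof_sum_split_telescopic}, and summing the $h^3$ terms over $m \leq T/h$ steps (the $L^1$ quadrature pieces add up to the full $L^1(0,T;\HH)$ norm, the sup-norm pieces pick up a factor $mh \leq T$), yields the stated $Ch^2$ global error bound.
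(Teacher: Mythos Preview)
Your plan is essentially the paper's proof: telescopic expansion \eqref{eq:proof_sum_split_telescopic}, stability of each Crank--Nicolson factor, and a local-error expansion obtained by multiplying out the two ordered products so that the symmetrization from averaging $P_+$ and $P_-$ matches Taylor through order $h^2$, leaving an $O(h^3)$ remainder built from triple products $A_k^iA_\ell^{3-i}$ and $A_{k_3}A_{k_2}A_{k_1}$.

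Two places to tighten. First, the identity you quote is off by a factor of two: from $\alpha_k = I + a_k\alpha_k$ one gets $\alpha_k(I+a_k) = I + 2\alpha_k a_k$ on $\DD(A_k)$, not $I + a_k + a_k\alpha_k a_k$; this is why your intermediate ``$(h/2)A\ujm + (h^2/8)A^2\ujm$'' is inconsistent with the (correct) final claim $S_h\ujm = \ujm + hA\ujm + (h^2/2)A^2\ujm + O(h^3)$. The paper works directly with $\prod_k(I+2\alpha_ka_k)$, sorts the expansion by the number of non-identity factors, and uses the identity
\[
\alpha_k a_k\alpha_\ell a_\ell\ujm = \big(a_k a_\ell + \alpha_k a_k^2 a_\ell + \alpha_k a_k a_\ell^2 + \alpha_k a_k\alpha_\ell a_\ell^3\big)\ujm
\]
to peel off the $h^3$ pieces, which makes the enumeration of the surviving triple products (those in the theorem's sums) explicit rather than schematic. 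Second, for stability the paper does not improvise an estimate but invokes \cite[Lemma~1]{Hansen2013}, giving $L[\alpha_k(I+a_k)] \leq \e^{(3/2)hM[A_k]}$ directly; your parenthetical sketch of this bound is on the right track but would need to be made precise.
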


\begin{proof}
Consider the telescopic global error expansion \eqref{eq:proof_sum_split_telescopic}.
According to the proof of \cite[Lemma~1]{Hansen2013}, when $hM[A_k] \leq 1$, we have
\[ L[\alpha_k (I + a_k)] \leq \e^{3/2\, hM[A_k]}, \quad \text{for } k = 1, \dots, q. \]
Then stability follows as
\[ L[S_h^{m-j}] \leq \e^{3/2\, (m-j)h \sum_{k=1}^q M[A_k]} \leq \e^{3/2\, T \sum_{k=1}^q M[A_k]}. \]

For the consistency proof we need the following identity, which holds for a sufficiently regular $u$:
\begin{equation}
\alpha_k a_k \alpha_{\ell} a_{\ell} \ujm = \alpha_k a_k (a_{\ell} + a_{\ell}^2 + \alpha_{\ell} a_{\ell}^3) \ujm \!=\! (a_k a_{\ell} + \alpha_k a_k^2 a_{\ell} + \alpha_k a_k a_{\ell}^2 + \alpha_k a_k \alpha_{\ell} a_{\ell}^3) \ujm,
\label{eq:proof_FSCN_identity}
\end{equation}
for $k,\ell = 1,\dots, q$. Note how identity \eqref{eq:identity_expansion} has been applied multiple times and how using it again lets us rewrite the local error as
\begin{equation} \begin{aligned}
&\uj - S_h\ujm \\
	&\quad= \uj - \Big(\frac12 (I + 2\alpha_qa_q) \cdots (I + 2\alpha_1a_1) + \frac12 (I + 2\alpha_1a_1) \cdots (I + 2\alpha_qa_q)\Big)\ujm. 
\end{aligned} \label{eq:proof_FSCN_reformulated_method}
\end{equation}
We expand \eqref{eq:proof_FSCN_reformulated_method} and consider first the terms composed of at most two operators $\alpha_{(\cdot)} a_{(\cdot)}$, i.e.\ in the compositions above consider first the terms given by choosing the identity all but zero, one or two times when expanding. This gives
\[ \begin{aligned}
&\Big( I + 2\sum_{k=1}^q \alpha_k a_k + 2\sum_{k=1}^q\sum_{\substack{\ell=1\\\ell\neq k}}^q \alpha_k a_k \alpha_{\ell} a_{\ell} \Big) \ujm \\
	&\quad=\Big( I + 2\sum_{k=1}^q a_k + a_k^2 + \alpha_k a_k^3 + 2\sum_{k=1}^q\sum_{\substack{\ell=1\\\ell\neq k}}^q a_k a_{\ell} + \alpha_k a_k^2 a_{\ell} + \alpha_k a_k a_{\ell}^2 + \alpha_k a_k \alpha_{\ell} a_{\ell}^3 \Big) \ujm.
\end{aligned} \]
Using this expansion, the local error can be written as a sum of a quadrature error and a splitting error
\[ \begin{aligned}
&q_j = \uj - \ujm - h \sum_{k=1}^q A_k \ujm - \frac{h^2}{2} \Big(\sum_{k=1}^q A_k\Big)^2 \ujm = \frac{h^2}{2} \int_{t_{j-1}}^{t_j} \Big(\frac{t_j-\tau}{h}\Big)^2 u^{(3)}(\tau) \dtau, \\
&s_j = \frac{h^3}{4} \Big( \sum_{k=1}^q \alpha_k A_k^3 + \sum_{k=1}^q\sum_{\substack{\ell=1\\\ell\neq k}}^q \alpha_k A_k^2A_\ell + \alpha_k A_kA_\ell^2 + \alpha_ka_k\alpha_\ell A_\ell^3) \Big) \ujm + r_j,
\end{aligned} \]
where $r_j$ consists of the terms composed of three or more operators $\alpha_{(\cdot)} a_{(\cdot)}$. 
That is, the terms given by choosing $I$ at most $q-3$ times in the expansion of \eqref{eq:proof_FSCN_reformulated_method}. Each of these terms, 
can be written either as
\begin{equation}
2^{K-1} \alpha_{k_K} a_{k_K} \cdots \alpha_{k_1} a_{k_1} \ujm \quad \text{or as} \quad 2^{K-1} \alpha_{k_1} a_{k_1} \cdots \alpha_{k_K} a_{k_K} \ujm,
\label{eq:proof_FSCN_HOT}
\end{equation}
for some integers $1 \leq k_1 < k_2 < \dots < k_K \leq q$ and some integer $K = 3, \dots, q$. Then, for a sufficiently regular $u$, the identity \eqref{eq:proof_FSCN_identity} gives that any term on the first form in \eqref{eq:proof_FSCN_HOT} can be written as
\[ \begin{aligned} 
&2^{K-1} \alpha_{k_K} a_{k_K} \cdots \alpha_{k_1} a_{k_1} \ujm = \\
&\quad h^3 2^{K-4} \alpha_{k_K} a_{k_K} \cdots \alpha_{k_4} a_{k_4} \big(\alpha_{k_3} A_{k_3} A_{k_2} A_{k_1} + \alpha_{k_3} a_{k_3} (\alpha_{k_2} A_{k_2}^2 A_{k_1} + \alpha_{k_2} A_{k_2} A_{k_1}^2 + \alpha_{k_2} a_{k_2} \alpha_{k_1} A_{k_1}^3 ) \big) \ujm. 
\end{aligned} \]
Of course similar expansions hold for the terms on the second form in \eqref{eq:proof_FSCN_HOT}. Finally, we apply the norm to the local error and the sought after global error bound follows since, for all $k = 1,\dots, q$, we have $L[\alpha_k] \leq 2$ and $L[\alpha_k a_k] \leq L[I] + L[\alpha_k] \leq 3$ when $hM[A_k] \leq 1$.
\end{proof}

%

\section*{Acknowledgments}
The authors thank Philipp Birken for helpful input on domain decomposition methods during the initial preparation of the paper. 


\bibliographystyle{IMANUM-BIB}
\bibliography{references}

\end{document}